\newcommand{\R}{\mathbb{R}}
\newcommand{\N}{\mathbb{N}}
\newcommand{\eexp}{\overrightarrow{\exp}}
\newcommand{\leexp}{\overleftarrow{\exp}}
\newcommand{\mc}{\mathcal}
\newcommand{\IM}{\mathrm{Im}}
\newcommand{\iso}{\mathrm{Iso}}
\newcommand{\coker}{\mathrm{coker}}
\newcommand{\dom}{\mathrm{dom}}
\newcommand{\vect}{\mathrm{Vec}}
\newcommand{\lrb}{\left(}
\newcommand{\rrb}{\right)}
\DeclareFontFamily{U}{wncy}{}
    \DeclareFontShape{U}{wncy}{m}{n}{<->wncyr10}{}
    \DeclareSymbolFont{mcy}{U}{wncy}{m}{n}
    \DeclareMathSymbol{\Sha}{\mathord}{mcy}{"58}
\newtheorem{thm}{Theorem}[section]
\newtheorem{lemma}[thm]{Lemma}
\newtheorem{cor}[thm]{Corollary}
\newtheorem{prop}[thm]{Proposition}
\newtheorem{defi}[thm]{Definition}
\theoremstyle{definition}
\newtheorem{remark}[thm]{Remark}
\theoremstyle{remark}
\newcommand{\be}{\begin{equation}}
\newcommand{\ee}{\end{equation}}
\numberwithin{equation}{section}
\renewcommand{\epsilon}{\varepsilon}
\title[third order open mapping]{Third order open mapping theorems and   applications to the end-point map}
\date{\today}
\subjclass[2010]{46A30, 53C17, 49K15}
\keywords{Open mapping theorems, end-point map, sub-Riemannian geometry}
\thanks{The first author has been supported by University of Padova STARS Project ``Sub-Riemannian Geometry and Geometric Measure Theory Issues: Old and
	New"}
\author{Francesco Boarotto}
\address{Dipartimento di Matematica Tullio Levi-Civita,
	Universit\`a   di Padova, Italy}
\email{\href{mailto:francesco.boarotto@math.unipd.it}{\nolinkurl{francesco.boarotto@math.unipd.it}}}
\author{Roberto Monti}
\address{Dipartimento di Matematica Tullio Levi-Civita,
	Universit\`a   di Padova, Italy}
\email{\href{mailto:monti@math.unipd.it}{\nolinkurl{monti@math.unipd.it}}}
\author{Francesco Palmurella}
\address{
ETH Z\"urich Department Mathematik,
R\"amistrasse 101, CH-8093 Z\"urich, Switzerland}
\email{\href{mailto:
francesco.palmurella@math.ethz.ch} 
{\nolinkurl{francesco.palmurella@math.ethz.ch}}}
\begin{document}

	\begin{abstract} This paper is devoted to a third order study of the end-point map in sub-Riemannian geometry. 
	We first prove third order open mapping results for maps from a Banach space
	into a finite dimensional manifold. In a second step, we compute the third order term in the Taylor expansion of the end-point map and 
	we specialize the abstract theory to the study of  length-minimality of sub-Riemannian strictly singular curves. We conclude with the third order analysis of  a specific strictly singular extremal that is not length-minimizing. 
	\end{abstract}
	 
	\maketitle
	
	\section{Introduction}\label{sec:intro}

	\newcommand{\m}{p}

The most challenging open problems in sub-Riemmanian geometry, such as Sard's problem and the regularity of length-minimizing curves, 
are related to our limited understanding of the end-point map, see \cite{Mont,OpProb}. 
	In this work, we extend the analysis of the end-point map  from the second to the third order. In a preliminary part of independent interest, we study  open mapping theorems of the third order for maps from a Banach space into a manifold.

	Let $M$ be a smooth manifold and $\Delta\subset TM$ be a totally non-holonomic (i.e., completely non-integrable) distribution
with rank $2\leq k<\mathrm{dim}(M)$.  
For every point $q_0\in M$, there exist a neighborhood $U\subset M$ of $q_0$ and linearly independent smooth vector fields $f_1,\ldots,f_k \in \mathrm{Vec}(U)$ such that $\Delta =\mathrm{span}\{ f_1,\ldots,f_k\} $ on $U$. The distribution $\Delta$ is non-holonomic (i.e., it satisfies the H\"ormander condition) if 
\be\label{eq:horm}
	\mathrm{Lie}_{q}\{f_1,\dots, f_k\}=T_{q}M \ \ \text{for every }q\in U,
\ee
where   $\mathrm{Lie}_{q}\{f_1,\dots, f_k\}$ denotes the evaluation at $q$ of the Lie algebra generated by $f_1,\dots, f_k$. Given $q\in U$, we say that $\Delta$ has {\em step} $s\in \N$ at $q$ if, to recover the equality in \eqref{eq:horm}, we need Lie brackets of length   $s$ and $s$ is the least integer with this property. We say that $\Delta$ has step $s$ on $U$ if $\Delta$ has step less than or equal to $s$ at every $q\in U$.

We fix on $\Delta$ the  metric that makes $f_1,\ldots,f_k$ orthonormal. A curve $\gamma\in AC([0,1]; 
U)$  is admissible if
$\dot\gamma\in \Delta_\gamma$ a.e.~on $[0,1]$. In this case, we have
\begin{equation} \label{ECCO}
  \dot\gamma =\sum_{i=1}^k u_i f_i(\gamma),\quad \text{a.e.~on $[0,1]$}
\end{equation}
for some unique vector of functions $u\in L^1([0,1];\R^k)$, called the control of $\gamma $. The length of $\gamma$ is $\mathrm{length}(\gamma) :=\| u\|_{L^1([0,1];\R^k)}$. Since our considerations are local around a reference curve $\gamma$, in the sequel we will assume $U=M$.

 Fix a point $q_0\in M$ and let $ X=L^1([0,1];\R^k)$. The end-point map is the map
 $F= F_{q_0}: X \to M$ defined by $F(u) = \gamma(1)$ where $\gamma$ is the unique solution of \eqref{ECCO} such that $\gamma(0)=q_0$.
   The curve $\gamma$
is said to be singular (or abnormal) if the corresponding control $u$ is a critical point of the differential
$d_u F: X \to T_{F(u)} M$, i.e., if the differential is not surjective.
The \emph{corank} of $u$ is the dimension of $T_{F(u)} M / \mathrm{Im}(d_u F)$.
Singular curves do not depend on the metric fixed on $\Delta$ but nontheless they may be length-minimizers. They do not have a counterpart in Riemannian geometry and do not obey the classical Hamiltonian formalism.

The Sard's problem 
investigates the size (dimension, measure, structure) of the set of points of $M$ that are reachable from $q_0$ by singular curves. Even though  Sard's theorem does not hold in infinite-dimensional spaces \cite{Kup_Sard}, it is expected that for the end-point map this set is not too big, see \cite{VittSard,Bel_Sard,Tre_Sard}.

Another important problem is the regularity of length-minimizing curves.
Montgomery first showed in \cite{Montgomeryabn} the existence of smooth {\em strictly} singular curves that are in fact length-minimizing. For the notion of strict singularity we defer to Definition~\ref{defi:stricts}.
For these curves, however, the first order necessary conditions provided by the Pontryagin Maximum Principle \cite{Pontryagin}
do not typically give any further regularity beyond the starting one (Lipschitz or AC). Some  results on the regularity of singular sub-Riemannian geodesics are in \cite{MR2421548,MR3148127,MR3573971,Monti2018,B}, see also
the surveys
\cite{MR,MR3205109}. The difficulty of the problem, again, lies in the complicated structure of the end-point map at critical points.

Similar problems are addressed e.g.~in \cite{CJT,Chit_08}, where the authors study generic properties of singular trajectories, and in \cite{Bon_Kupka,boabar,boasig}, where some regularity results are established for the more general class of control systems affine in the control. A different approach towards singular 
length-minimizing curves can be found in \cite{Boarottoinv,boaler,Ler}, where the authors follow the topological viewpoint rather than the differential one, and study singular curves via homotopy theory and results \`a la Morse. In the case of Carnot groups, singular  
curves are contained in the zero set of specific polynomials, see \cite{MR3077915,MR3853926}.

The second order analysis of the end-point 
map was developed by Agrachev and Sarychev   in \cite{AS96}.  
This  theory  
provides 
necessary conditions for strictly singular length-minimizers. These conditions are deduced from second order open mapping theorems that exploit the notion of regular zero together with Morse's index theory \cite[Chapter 20]{AgraBook}.
This is the starting point of our work.

In a first step, in Section~\ref{sec:thirorder}, we prove abstract third order open mapping theorems for functions $F: X\to M$, where $X$ is a Banach space and $M$ a smooth manifold. In Definition~\ref{defi:thirddiff},
we introduce an intrinsic notion  of  third differential
 $
 \mc{D}_0^3F: \dom(\mc{D}_0^3F) \to \coker (d_0 F) $,
 where $ \dom(\mc{D}_0^3F)\subset \ker(d_0 F)$ is a precise subspace of the kernel of the differential of $F$ at $0\in X$.
Then, we adapt the notion of regular zero to the third differential.
For a given isotropic vector of the second differential
$w_0 \in \mathrm{Iso}(\mc{D}^2_0F)$, in  Definition~\ref{defi:w0regzero}
we introduce the notion of 
 $w_0$-regular zero.

\begin{thm}\label{thm:open_cor_one}
	Let $X$ be a Banach space and $U\subset X$   an open neighborhood of the origin. Let   $F:U\to M$  be a smooth mapping having  a critical point at $0$ of corank $h\geq 1$.  Then:
	
	\begin{itemize}
	 \item [(i)]  If  $h=1$ and  there exists 
	$v\in\dom(\mc{D}_0^3F)$ such that 
	$
		\mc{D}_0^3F(v)\ne 0
	$, 
then $F$ is open at the origin. 

\item [(ii)] For any $h\geq 1$, if there exist $w_0 \in \mathrm{Iso}(\mc{D}^2_0F)$ and $v_0\in \dom(\mathcal D_0^3 F)$ such that $v_0$ is a $w_0$-regular zero for $\mc{D}^3_0 F$, then $F$ is open at the origin. 
	\end{itemize}	
\end{thm}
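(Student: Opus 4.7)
My plan is to reduce the problem to a finite-codimensional equation on $\ker(d_0 F)$ and then run a fixed-point or degree-theoretic perturbation scheme tailored to the third-order expansion. Working in local coordinates near $F(0)\in M$, I would fix a splitting $T_{F(0)}M=\mathrm{Im}(d_0F)\oplus E$ with $E\cong\coker(d_0F)$ and associated projection $\pi$. Using a right inverse of $d_0F$ and the implicit function theorem, the openness of $F$ at $0$ is equivalent to the openness at the origin of a smooth map $G:V\subset\ker(d_0F)\to E$ satisfying $d_0G=0$, whose intrinsic second and third differentials coincide, on their respective domains, with $\pi\circ\mc{D}_0^2F$ and $\pi\circ\mc{D}_0^3F$. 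After this step one may replace $F$ by $G$ throughout.

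\textbf{Case (i): $h=1$.} Here $E\cong\R$. Given $v\in\dom(\mc{D}_0^3F)$ with $e:=\mc{D}_0^3F(v)\ne 0$, the Taylor expansion of $G$ along the ray $t\mapsto tv$ reads $G(tv)=t^3 e+o(t^3)$, which already attains both signs for small $|t|$. To upgrade this one-parameter information into genuine openness, I would, for each small $y\in\R$, set $t_y=(y/e)^{1/3}$ and look for $u=t_y v+r$ solving $G(u)=y$ by treating the equation for $r$ as a contraction in a ball of radius small compared to $|t_y|$, so that the linear-in-$r$ part coming from the derivatives of $G$ at $t_y v$ dominates the higher-order Taylor remainder.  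This yields a fixed point $r=r(y)$ and hence a preimage of every $y$ in a neighborhood of the origin in $E$.

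\textbf{Case (ii): general $h$ and the main obstacle.} Following the Agrachev--Sarychev philosophy for regular zeros but one order higher, I would exploit the isotropic direction $w_0$, along which the quadratic part of $G$ already vanishes, so that the third-order term becomes accessible through variations living in $\dom(\mc{D}_0^3F)$. The assumption that $v_0$ is a $w_0$-regular zero should encode the surjectivity of the relevant derivative at $v_0$, so that the finite-dimensional map
\[
  \Phi_\varepsilon(v)=\varepsilon^{-3}\,\pi\bigl(F(u_\varepsilon(v))\bigr),
\]
defined on a small sphere centered at $v_0$ in a finite-dimensional slice of $\dom(\mc{D}_0^3F)$, for a judiciously chosen family $u_\varepsilon(v)=\varepsilon w_0+\varepsilon^{\alpha}v+\cdots$, has non-zero Brouwer degree around $0\in E$. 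A homotopy between $\Phi_\varepsilon$ and its leading third-order contribution then yields preimages for every sufficiently small $y\in E$ and gives the openness. The technical heart of the argument is to pin down the correct exponent $\alpha$ and to ensure that $u_\varepsilon(v)$ stays inside $\dom(\mc{D}_0^3F)$, so that the principal term $\varepsilon^3\mc{D}_0^3F(\cdot)$ dominates all Taylor remainders uniformly in $v$; checking that the $w_0$-regularity condition of Definition~\ref{defi:w0regzero} is precisely the minimal requirement that makes this degree argument go through is where the main effort will lie.
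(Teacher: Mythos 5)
Your Lyapunov--Schmidt reduction to a map $G$ on $\ker(d_0F)$ is a legitimate alternative starting point to the paper's direct construction in $X$ (and it does simplify matters slightly, since, once one checks that $G$'s second and third differentials restrict correctly, most of the paper's corrector equations become $0=0$). However, the sentence claiming that openness of $F$ at $0$ is \emph{equivalent} to openness of $G$ at $0$ needs justification: after the reduction, $F$ is locally conjugate to $(k,i)\mapsto(i,G(k,i))$ with $i$ ranging over $\IM(d_0F)$, and openness of $G(\cdot,0)$ alone does not formally imply openness of this map unless one shows openness of $G(\cdot,i)$ with radii uniform in $i$. This can be done, because the cubic leading term is quantitatively stable under the $O(|i|)$ perturbation, but it is an argument you would have to supply; the paper sidesteps the issue by keeping the free variable $x\in E_1\cong\IM(d_0F)$ inside the perturbation $\phi_\epsilon$. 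Granting that, your plan for case (i) can be made to work; in fact, once you have $G(tv)=\tfrac{t^3}{6}\mc{D}_0^3F(v)+o(t^3)$ (note the missing $1/6$), the intermediate value theorem along the ray already gives openness into the one-dimensional cokernel, so the contraction step is dispensable.

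For case (ii) there is a genuine conceptual gap, not just an unfinished computation. You propose a one-parameter family $u_\epsilon(v)=\epsilon w_0+\epsilon^\alpha v+\cdots$ with $v$ running over a sphere centered at $v_0$ \emph{inside a slice of $\dom(\mc{D}_0^3F)$}, and you want the degree of $\Phi_\epsilon$ to be nonzero around $0\in E=\coker(d_0F)$. But the very definition of $\dom(\mc{D}_0^3F)$ (Definition~\ref{defi:thirddiff}) says that $\pi_{\coker(d_0F)}\bigl(d_0^2F(v,x)\bigr)=0$ for \emph{every} $x\in X$ and every $v\in\dom(\mc{D}_0^3F)$; in particular $\mc{D}_0^2F(w_0,v)=0$ for all $v$ on your sphere. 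Consequently the entire quadratic contribution of $F\circ u_\epsilon$ to the cokernel vanishes, and the leading nonzero term is purely cubic. A cubic built only from vectors in $\dom(\mc{D}_0^3F)$ can at best be surjective onto $\coker(F,2,w_0)$, which is a \emph{proper quotient} of $E$ precisely when $\IM(F,2,w_0)\neq 0$ --- the case the hypothesis is designed to handle. So the degree onto $E$ must be zero and the homotopy argument cannot close. What is missing is the two-parameter structure: one free variable $s$ in $\ker(d_0F)$ but \emph{outside} $\dom(\mc{D}_0^3F)$, coupled to $w_0$ through the Hessian to span $\IM\bigl(\mc{D}_0^2F(w_0,\cdot)\bigr)$, and a second free variable $t\in\dom(\mc{D}_0^3F)$ coupled to $v_0$ through the third differential to span $\coker(F,2,w_0)$. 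These two contributions are homogeneous of different degrees in their arguments, so $s$ and $t$ must enter $\phi_\epsilon$ at \emph{different} powers of $\epsilon$ (and $v_0$, $w_0$ at yet others) tuned so that both land at the same target order while every lower-order term vanishes; this is exactly the purpose of the exponents $6,7,8,11,\ldots,19$ in the paper's map \eqref{eq:mapmulti}, and a single exponent $\alpha$ cannot reproduce it.
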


The first statement is proved in  
Section~\ref{sec:parti}, while the latter 
 is 
shown in Section~\ref{sec:partii}. Notice that the two statements are different in nature: indeed the first one does not use the notion of regular zero. Also, point (ii)  
can be seen as a more geometric version of the third order open mapping theorem  proved by Sussmann in \cite{SusOpen}. 
Its rephrasement  in   
 algebraic terms can be found in 
Theorem~\ref{thm:regzerogen}.
However, this algebraic version  is less satisfactory than its second order counterpart, where the notion of index of a quadratic form produces  conditions that can be applied effectively to the end-point map. In our case, finding sufficient conditions of the algebraic type ensuring the existence of a regular zero for a vector valued cubic map (polynomials of degree $3$) seems a difficult task.

 In Section~\ref{sec:toan}, we use tools of chronological calculus to compute the third order term in the Taylor expansion of the end-point map, see Proposition~\ref{prop:thirdiffendp}. In fact, our procedure is algorithmic and can be used, in principle, to compute also higher order terms. We shall see that, differently from the second order, the representation of the third differential in terms of Lie brackets is not unique. However, the scalarizations onto the cokernel of the first  
differential are uniquely defined.  

 Theorem \ref{thm:open_cor_one}
  and the formula for the third differential of the end-point map
    yield  the following  necessary condition
satisfied  by any adjoint curve of a singular 
length-minimizing trajectory $\gamma$ of corank $1$. The construction of adjoint curves is recalled in Section~\ref{QUA}.
We denote by $d_ u F$ the differential of the end-point map $F:L^1([0,1];\R^k)\to M$ starting from $\gamma(0)=q_0$, and computed at the point $u\in L^1([0,1];\R^k)$, the control of $\gamma$.

	\begin{thm}\label{thm:pointwisecondBIS}
	Let $(M,\Delta,g)$ be a sub-Riemaniann manifold with $\Delta = \mathrm{span}\{ f_1,\ldots, f_k\}$ for $f_1,\ldots, f_k\in \mathrm{Vec}(M)$. Assume that:
	\begin{itemize}
	 \item [(i)] $\gamma:[0,1]\to M $ is a strictly singular length-minimizing curve  of corank $1$;
	 \item [(ii)] the domain $\dom(\mc{D}_u^3F )$ is of finite codimension in $\ker(d_u F)$.
	\end{itemize}
Then any adjoint curve $\lambda:[0,1]\to T^*M$ satisfies,  
	for every  $t\in [0,1]$ and for every $i,j,\ell =1,\ldots, k$,  
	\be \label{result}
		\big\langle \lambda(t), [ f_{i},[f_{j},f_{\ell} ]](\gamma (t))\big\rangle+\big\langle \lambda(t),[ f_{\ell},[ f_{j},f_{i} ]](\gamma (t))\big \rangle =0.
	\ee
	\end{thm}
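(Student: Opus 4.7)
The plan is to apply Theorem~\ref{thm:open_cor_one}(i) in contrapositive form. The length-minimality of the strictly singular corank-one curve $\gamma$ prevents $F$ from being open at its control $u$, because openness would allow us to perturb $u$ into an admissible control with strictly smaller $L^1$-norm reaching the same endpoint $\gamma(1)$. Theorem~\ref{thm:open_cor_one}(i) then forces
\[
\mc{D}_u^3 F(v) = 0 \quad \text{for every } v\in\dom(\mc{D}_u^3 F).
\]
Corank one means $\coker(d_u F)\simeq\R$, canonically generated by the endpoint value $\lambda(1)$ of any adjoint curve, so the vanishing is equivalent to the scalar identity $\langle\lambda(1),\mc{D}_u^3 F(v)\rangle=0$ for all such $v$.

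The second step is to unpack this identity via the explicit formula for $\mc{D}_u^3 F$ established in Proposition~\ref{prop:thirdiffendp}. Transporting $\lambda(1)$ back along the adjoint flow to $\lambda(t)\in T^*_{\gamma(t)}M$, the scalarized third differential becomes a weighted iterated triple integral in times $0\le t_1\le t_2\le t_3\le 1$ whose integrand is the product $v_i(t_1)v_j(t_2)v_\ell(t_3)$ paired against expressions of the form $\langle\lambda(\cdot),[f_i,[f_j,f_\ell]](\gamma(\cdot))\rangle$ summed over indices. The symmetric combination in \eqref{result} appears already at the level of the Volterra expansion: it is the pairing of orderings in which the middle index $j$ is fixed and $i$, $\ell$ are exchanged.

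The final step extracts the pointwise identity \eqref{result} from the integral vanishing. Here hypothesis (ii) is decisive: because $\dom(\mc{D}_u^3 F)$ has finite codimension in $\ker(d_u F)$, variations $v$ concentrated on arbitrarily small intervals $J\subset[0,1]$ can be corrected by a fixed finite-dimensional remainder landing in $\dom(\mc{D}_u^3 F)$, whose contribution to the cubic is of lower order in $|J|$. Testing against $v_i=a_i\mathbbm{1}_J$, shrinking $J$ around a Lebesgue point $t_0$, and reading off the coefficient of $a_ia_ja_\ell$ in the resulting cubic polynomial in $a\in\R^k$ yields \eqref{result} at $t_0$; continuity of $\lambda$, $\gamma$ and the brackets extends the identity to every $t\in[0,1]$.

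The main obstacle is in this last step, namely the identification of the precise algebraic combination that survives. One must verify both that the finite-codimension correction from (ii) does not pollute the leading cubic coefficient, and that the six Volterra orderings of an index triple $(i,j,\ell)$ pair up so that orderings with $j$ in a non-middle position either cancel by antisymmetry or are absorbed into the image of $d_u F$ (hence give zero after pairing with $\lambda(1)$), leaving precisely the combination $[f_i,[f_j,f_\ell]]+[f_\ell,[f_j,f_i]]$. This bookkeeping, together with the correct insertion of the adjoint flow, is where the actual work of the proof is concentrated.
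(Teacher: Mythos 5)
Your first two steps are sound and match the paper's strategy: length-minimality plus corank one plus Theorem~\ref{thm:open_cor_one}(i) force $\lambda\,\mc D_u^3 F\equiv 0$ on $\dom(\mc D_u^3 F)$, and Proposition~\ref{prop:thirdiffendp} turns this into the vanishing of a chronological triple integral. But the extraction step as you describe it would fail. Testing with $v=a\,\mathbbm 1_J$ for a constant $a\in\R^k$ gives essentially no information: once the time-dependent fields $g^{u,\tau}$ are frozen at a base time $\bar t$ (which they approach to order $|J|$ by Lipschitz regularity of the flow), the triple integrand contracts to $\langle\lambda,[g^{\bar t}_a,[g^{\bar t}_a,g^{\bar t}_a]]\rangle=0$, so the would-be leading $|J|^3$ coefficient vanishes identically and the residual contribution is $O(|J|^4)$. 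In other words, the coefficient of $a_ia_ja_\ell$ that you propose to read off is zero for any localized \emph{constant} test, regardless of hypothesis~(ii). The expression in \eqref{result} also does not ``appear already in the Volterra expansion'' by pairing orderings with $j$ in the middle; the nested bracket $[g_{\tau_3},[g_{\tau_2},g_{\tau_1}]]$ over the ordered simplex has no such built-in symmetrization, and extracting it requires real work.

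What the paper actually does is a two-stage argument. First, it localizes using \emph{zero-mean} $v$ rescaled into $[\bar t,\bar t+s]$; integration by parts then produces the genuine leading $s^3$ coefficient
$\int_0^1 \langle\lambda,[g^{\bar t}_{z},[g^{\bar t}_{z},g^{\bar t}_{v}]](q_1)\rangle\,dt$, which crucially intertwines $v$ with its primitive $z$ and is \emph{not} a cubic polynomial in a constant vector $a$; this is precisely why the constant test gives nothing. Second, polarization together with the Jacobi identity rewrites the associated trilinear form as
$2\mathfrak T(z_1,z_2,z_3)=\int_0^1\langle\lambda,[g^{\bar t}_{z_1},[g^{\bar t}_{z_3},g^{\bar t}_{v_2}]]\rangle+\langle\lambda,[g^{\bar t}_{z_2},[g^{\bar t}_{z_3},g^{\bar t}_{v_1}]]\rangle\,dt$,
and the pointwise identity \eqref{result} is then pried out by testing with $z_1=\alpha x$, $z_2=\alpha y$, $z_3=\dot\alpha z$ for a carefully chosen oscillatory $\alpha$ with $\int_0^1\alpha\dot\alpha^2\,dt\neq 0$. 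Hypothesis~(ii) enters exactly at this point, not as a correction to indicator functions but to guarantee that only finitely many frequencies of the trigonometric basis are excluded from $\dom(\mc D_u^3 F)$, so that an admissible $\alpha$ of the required form exists. The oscillatory choice, the integration by parts for zero-mean tests, and the Jacobi-identity bookkeeping are the missing ingredients that your proposal would need to supply.
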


This result is proved in Section~\ref{QUA}. 
 Notice the nontrivial  assumption (ii) on the dimension of the domain of the third differential. Condition \eqref{result} is the extension to the third order of the
first and second order 
  necessary conditions for length-minimality.
In fact, if $\gamma$ is a corank-one singular length-minimizing curve with adjoint curve $\lambda$, then 
by  the Pontryagin Maximum Principle  we have 
$\langle \lambda , f_j\rangle =0$ identically along the curve, for every $j=1,\ldots, k$.
If in addition $\gamma$ is strictly singular, then $\langle \lambda , [f_i,f_j]\rangle =0$ identically along $\gamma$, for every $i,j=1,\ldots, k$.
This is known 
as Goh condition, see \cite{Goh}.

In Section \ref{sec:example}
 we show an application of the general theory to a specific example of singular curves. We recall the notion of {\em extremal} curves: a horizontal curve $\gamma$ is extremal if it has an adjoint curve $\lambda$ that satisfies the Pontryagin Maximum Principle. A length-minimizing curve is an extremal, but the viceversa needs not   hold. The notion of (strict) singularity applies to extremal curves as well, see Definition~\ref{defi:stricts}.
 
 \begin{thm}\label{thm:example}
 Consider on $M =\R^3$ the distribution $\Delta =\mathrm{span} \{f_1,f_2\}$, where  
\begin{align}\label{eq:vfields}
			f_1=\frac{\partial}{\partial x_1} \quad \textrm{and}\quad
			f_2=(1-x_1)\frac{\partial}{\partial x_2}+x_1^p\frac{\partial}{\partial x_3},
		\end{align}	
and $p\in \N$. Fix on $\Delta$ the metric $g$ that makes $f_1$ and $f_2$ orthonormal. Then:
	\begin{itemize}
		 \item [(i)] For any $p\geq 2$  the curve $t\mapsto \gamma(t) = (0,t,0)$  is a strictly singular extremal  
		 in $(\R^3,\Delta)$. 
		  \item [(ii)] If $p$ is an even integer then $\gamma$  is locally length-minimizing   in $(\R^3,\Delta,g)$.
		  \item [(iii)] If $p=3$  then $\gamma$ is  not locally length-minimizing in $(\R^3,\Delta,g)$. 
	\end{itemize}
\end{thm}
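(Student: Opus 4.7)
My plan is to address each of the three parts separately, with Part~(ii) being the most substantive.

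\textbf{Part (i).} This is a direct verification. Since $x_1 \equiv 0$ along $\gamma(t) = (0,t,0)$, we have $f_2(\gamma(t)) = \partial_{x_2}$, so $\gamma$ is admissible with control $\bar u = (0, 1)$ and has length $1$. The abnormal Pontryagin conditions $\langle \lambda, f_j(\gamma) \rangle = 0$ for $j = 1, 2$ force $\lambda_1 \equiv \lambda_2 \equiv 0$, and the constant covector $\lambda(t) \equiv dx_3$ is easily seen to solve the adjoint ODE, exhibiting $\gamma$ as a corank-one singular extremal. Strict singularity amounts, in the corank-one case, to the Goh identity $\langle \lambda, [f_1, f_2] \rangle|_\gamma \equiv 0$; this holds because $[f_1, f_2] = -\partial_{x_2} + p\, x_1^{p-1} \partial_{x_3}$ has zero $dx_3$-component at $x_1 = 0$ for every $p \geq 2$.

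\textbf{Part (ii).} For even $p$ the pointwise inequality $x_1^p \geq 0$ produces a rigidity principle. Let $\tilde\gamma$ be any admissible curve from $(0,0,0)$ to $(0,1,0)$ contained in a thin tube $\{|x_1| < \epsilon\}$ around $\gamma$, with control $\tilde u = (u_1, u_2)$. The endpoint constraints read
\[
\int_0^1 u_1\, dt = 0, \qquad \int_0^1 (1-\tilde x_1) u_2\, dt = 1, \qquad \int_0^1 \tilde x_1^p u_2\, dt = 0.
\]
Decomposing $u_2 = u_2^+ - u_2^-$ and using the second constraint one obtains
\[
\int_0^1 |u_2|\, dt = 1 + \int_0^1 \tilde x_1 u_2\, dt + 2\int_0^1 u_2^-\, dt,
\]
together with the crude bound $|\int \tilde x_1 u_2| \leq \epsilon \|u_2\|_{L^1}$. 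Thus, when $\int u_2^-$ is not too small, one immediately obtains $\mathrm{length}(\tilde\gamma) \geq \|u_2\|_{L^1} \geq 1$. In the remaining regime where $u_2 \geq 0$ essentially, the third constraint combined with $\tilde x_1^p \geq 0$ forces $\tilde x_1^p u_2 \equiv 0$ a.e., so $\tilde x_1 = 0$ wherever $u_2 > 0$; this collapses the $x_2$-constraint to $\int u_2 = 1$ and yields $\mathrm{length}(\tilde\gamma) \geq 1$. A quantitative combination of these two regimes produces the uniform lower bound and hence local length-minimality.

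\textbf{Part (iii).} For $p = 3$ the oddness of the exponent destroys the above rigidity, making it possible to reach $(0,1,0)$ through strictly shorter admissible curves. Computing the third differential $\mc D_{\bar u}^3 F$ at the reference control via Proposition~\ref{prop:thirdiffendp}, I will exhibit a variation $v \in \dom(\mc D_{\bar u}^3 F)$ (or, equivalently, a $w_0$-regular zero for some isotropic $w_0 \in \mathrm{Iso}(\mc D^2_{\bar u} F)$) whose scalarization against $\lambda = dx_3$ is non-zero. Applying Theorem~\ref{thm:open_cor_one} to the extended end-point map $(F, \mathrm{length})$ then yields openness in the length-decreasing direction, producing a shorter admissible curve and contradicting local minimality.

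The main obstacle is the quantitative merging of the two regimes in Part~(ii): the intermediate case ($u_2^-$ small but non-zero, with $\tilde x_1$ not pointwise zero) has to be handled so that the first-order $\epsilon$-error $\int \tilde x_1 u_2$ is genuinely absorbed by the rigidity gain coming from $x_1^p \geq 0$. Parts~(i) and~(iii) reduce to explicit, though non-trivial, computations within the framework developed in Sections~\ref{sec:thirorder}--\ref{sec:toan}.
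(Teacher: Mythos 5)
Your Part~(i) contains a conceptual error in the treatment of strict singularity. You assert that, in the corank-one case, strict singularity ``amounts to'' the Goh identity $\langle \lambda, [f_1,f_2]\rangle|_\gamma \equiv 0$. This is backwards: Definition~\ref{defi:stricts} defines strict singularity as the \emph{absence} of any normal lift (every $(\lambda,\lambda_0) \in \IM(d_u\mathcal F)^\perp$ has $\lambda_0 = 0$), whereas the Goh condition is a \emph{necessary} condition satisfied by the abnormal lift of a strictly singular length-minimizer. Verifying Goh for the abnormal covector does not rule out the existence of a normal lift. The paper instead argues directly: if $\gamma(t)=(0,t,0)$ admitted a normal lift, it would solve the Hamiltonian system $\dot\gamma = H_\lambda$, $\dot\lambda = -H_x$; the first equation forces $\lambda_1=0$, $\lambda_2=1$, and the second then gives $\dot\lambda_1 = \lambda_2^2 = 1$, a contradiction. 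Your argument would need to be replaced by something of this kind.

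Your Part~(ii) contains a gap you yourself flag: the ``quantitative merging'' of the two regimes ($\int u_2^-$ large versus $u_2 \geq 0$) is left unresolved, and the intermediate case is exactly where the difficulty lives. The decomposition you propose also does not use the crucial constraint $|\dot{\tilde x}_1| \leq 1$, $\tilde x_1(0) = \tilde x_1(\tau) = 0$, which ties the sup-norm $\beta = \|\tilde x_1\|_\infty$ to the time interval. The paper's argument (following \cite[\S 7.1]{LS}) bypasses your case split entirely: writing $V(\tau)=\int_0^\tau u_2\,dt$, it shows that $\int u_2(1-\eta_1)\,dt \le V(\tau)+\tau\beta$ and then establishes $\beta \le (\tau-V(\tau))/\tau$ from the two-sided bound $t_0\beta^{p+1}\le \int_0^\tau \eta_1^p\,dt \le \beta^p(\tau-V(\tau))$. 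The upper bound uses $\int \eta_1^p u_2=0$; the lower bound uses the Lipschitz control on $\eta_1$ (the ``tent'' under the graph of $|\eta_1|$ near its maximum) together with the evenness of $p$. This is the mechanism your proof lacks, and without it the $\epsilon$-error in the regime where $\int u_2^-$ is small but nonzero cannot be absorbed. Your Part~(iii) is only a plan, but it does match the paper's route: compute $g_1^t,g_2^t$ explicitly, observe $\mathcal D_0^2 G = 0$ so that $\dom(\mathcal D_0^3 G)=\ker(d_0G)$, and exhibit a control $v$ with $\lambda\mathcal D_0^3 G(v) \neq 0$ (the paper takes $v=(2\pi\sin(2\pi t),1)$ and gets $15$), then apply Theorem~\ref{thm:open_cor_one}~(i).
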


Using Theorem \ref{thm:open_cor_one}, or alternatively Theorem \ref{thm:pointwisecondBIS},
we show that when $p=3$ the end-point map is open at the control of $\gamma$. 
For $\m=5,7,\ldots$, the curve $\gamma$  is probably not length-minimizing. To prove this we would need open mapping theorems of order higher  than $3$.

	\section{Third order open mapping theorems}\label{sec:thirorder}
	
	\subsection{Intrinsic third differential}

Let  $(X,\|\cdot\|)$ be a Banach space and
let $U\subset X$  be  an open neighborhood of the origin. 
We consider a smooth mapping
$F:U\to M$, where $M$ is a smooth 
manifold of dimension $m\in\mathbb N$. 
Here and hereafter, by ``smooth'' we always mean ``$C^\infty$-smooth''.

  By fixing  a local chart for $M$
centered at $F(0)$, we may consider the representative
of $F$ in this chart as a map from $U$ to $\R^m$, and accordingly
consider its $k$-th directional derivatives  $d^k_0 F: X \to \R^m$ 
\[
d_0^k  F(v ) :
= \left. \frac{d^k}{dt^k} F(t v)\right|_{t=0},
\quad v \in X.
\]
We denote by $(v_1,\ldots, v_k) \mapsto d_0^kF (v_1,\ldots, v_k)$ the associated $k$-multilinear maps.
Then we may expand $F$ as a Taylor series at $0$:
\[
F(v)=\sum_{j=0}^k
\frac{1}{j!}d_0^jF(v)+o(\|v\|^k).
\]
For $k\geq 2$,
the maps $d_0^k F$ do not behave tensorially and depend
on  the specific choice of the local chart of $M$.

In \cite[Chapter 20]{AgraBook},
the authors  study a  
chart-independent (or ``intrinsic'') notion
of Hessian, by quotienting out the action of the differential.  
Recall   that 0 is a critical point of $F$ if the differential $d_0F:X\to T_{F(0)}M$ is not surjective.  
The cokernel of $d_0F$ is the quotient space 
\[
\coker(d_0F) = T_{F(0)}M/ \IM(d_0F),
\] and the corank of this critical point is its dimension:
$\dim\lrb T_{F(0)}M/ \IM(d_0F) \rrb=\dim(M)-\dim\lrb \IM\lrb d_0F\rrb\rrb$.   
The central definition for the theory is  the following.
 
	\begin{defi}  
The \emph{intrinsic Hessian} of $F$ at $u=0$  is the quadratic map
$\mc{D}_0^2F:\ker(d_0F) \to \coker(d_0F)$ defined by
\[
\quad
 \mc{D}_0^2F(v ):=\pi_{\coker(d_0F)}( d_0^2F( v)),
\]
where $d^2_0 F$ is  
computed with respect to any chart centered at $F(0)$
and $\pi_{\coker(d_0F)}$ is the   projection
onto $\coker(d_0F)$.
\end{defi}

This definition is independent of the chosen chart 
and   
for any  linear form 
\[
\lambda\in \IM(d_0F)^\perp=
\{ \lambda \in T_{F(0)}^*M \mid \lambda (d_0 F(x) )= 0 \textrm{ for all $x\in X$}\},
\]
and any vector  $v\in \ker(d_0 F)$
there holds
\be\label{eq:hessintr}
\lambda \mc{D}_0^2{F}(v )=\mc{L}_V\circ \mc{L}_V(a\circ F)\big|_0,
\ee
where $a\in C^\infty(M)$ is any function such that $d_0a=\lambda$,
$V \in\vect(U)$ is any smooth vector field  such that $V(0)=v$, and $\mc{L}_V$ denotes the Lie derivative along $V$.

We denote by $(v,w)\mapsto \mc{D}_0^2F(v,w)$ the bilinear form associated with the quadratic map $\mc{D}_0^2F(v)$.

\begin{defi}
A \emph{regular zero} for the intrinsic Hessian  $\mc{D}_0^2F$ is an element $v\in\ker(d_0F)$ such that:
\begin{itemize}
\item [(i)] $\mc{D}_0^2F(v)=0$;
\item [(ii)] the linear map $w\mapsto \mc{D}_0^2F(v,w)$ is surjective  from $\ker(d_0F)$ onto $\coker(d_0F)$.
\end{itemize}
\end{defi}

With these notions, the following theorem holds, see \cite[Theorem 20.3]{AgraBook}.

\begin{thm}[Agrachev-Sarychev]  
\label{prop:regzero2}
If the intrinsic Hessian   $\mc{D}_0^2F$ has a regular zero then $F$ is open at the origin.
\end{thm}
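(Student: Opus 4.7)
The plan is to produce, for each sufficiently small $\epsilon>0$, a finite-dimensional family of controls whose image under $F$ covers a ball of radius of order $\epsilon^2$ around $F(0)$; letting $\epsilon\to 0^+$ then yields openness at the origin. Working in a chart centered at $F(0)$, identify $T_{F(0)}M\cong\R^m$ and set $r=\dim\IM(d_0F)$, $h=\dim\coker(d_0F)$ (so $r+h=m$). Pick $e_1,\dots,e_r\in X$ so that $\{d_0F(e_i)\}$ is a basis of $\IM(d_0F)$, and $w_1,\dots,w_h\in\ker(d_0F)$ so that $\{\mc{D}_0^2F(v,w_j)\}$ is a basis of $\coker(d_0F)$; the latter is possible because $v$ is a regular zero. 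Since $\mc{D}_0^2F(v)=0$, the vector $\tfrac12 d_0^2F(v,v)$ lies in $\IM(d_0F)$, so there is $\eta_0\in\R^r$ with $\sum_i(\eta_0)_i\,d_0F(e_i)=-\tfrac12 d_0^2F(v,v)$.

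For $\epsilon>0$ and $(\tau,\eta)$ in a compact neighborhood $K$ of $(0,\eta_0)$ in $\R^h\times\R^r$, define
\[
\Psi_\epsilon(\tau,\eta):=F\!\left(\epsilon v+\epsilon\sum_{j=1}^h\tau_j w_j+\epsilon^2\sum_{i=1}^r\eta_i e_i\right).
\]
Taylor-expanding at $0\in X$ and using $v,w_j\in\ker(d_0F)$, one checks that $(\Psi_\epsilon(\tau,\eta)-F(0))/\epsilon^2$ extends smoothly across $\epsilon=0$ to a family $\Phi_\epsilon(\tau,\eta)$ whose limit is
\[
\Phi_0(\tau,\eta):=\sum_{i}\eta_i\,d_0F(e_i)+\tfrac12 d_0^2F\!\left(v+\sum_j\tau_jw_j,\,v+\sum_j\tau_jw_j\right).
\]
By construction $\Phi_0(0,\eta_0)=0$, and the partials $d_0F(e_i)$ (along $\eta_i$) together with $d_0^2F(v,w_j)$ (along $\tau_j$) span $T_{F(0)}M$: the $d_0F(e_i)$ fill $\IM(d_0F)$, while their cokernel projections $\mc{D}_0^2F(v,w_j)$ fill $\coker(d_0F)$. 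Hence $d_{(0,\eta_0)}\Phi_0$ is an isomorphism.

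By the inverse function theorem, $\Phi_0$ maps a compact neighborhood $K$ of $(0,\eta_0)$ diffeomorphically onto a set containing some $\overline{B_\delta(0)}$. Since $\Phi_\epsilon\to\Phi_0$ in $C^1(K)$, a standard contraction-mapping argument using a fixed right inverse of $d_{(0,\eta_0)}\Phi_0$ yields $\epsilon_0,\delta'>0$ such that $\Phi_\epsilon(K)\supset B_{\delta'}(0)$ for every $0<\epsilon\le\epsilon_0$. Unrolling the $\epsilon^2$-rescaling, $F$ covers $B_{\delta'\epsilon^2}(F(0))$ via controls of norm $O(\epsilon)$ in $X$; given any neighborhood $U$ of $0\in X$, taking $\epsilon$ small enough that these controls all lie in $U$ shows $F(U)\supset B_{\delta'\epsilon^2}(F(0))$, i.e., $F$ is open at the origin.

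The main obstacle is the uniform-coverage step: a naive application of the inverse function theorem to each $\Phi_\epsilon$ separately would only produce balls whose radii might vanish with $\epsilon$, defeating the $\epsilon^2$-rescaling. The fix is to work on the \emph{fixed} compact $K$ and invoke a stability statement (essentially a Banach fixed point argument on a small ball in $K$) that makes the ball radius $\delta'$ uniform in $\epsilon\in(0,\epsilon_0]$.
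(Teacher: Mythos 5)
The paper cites this theorem from \cite[Theorem 20.3]{AgraBook} without reproving it, so there is no in-paper proof to compare against. Your argument is correct and is essentially the classical Agrachev--Sarychev proof, which is also the same perturbation-and-scaling scheme the paper adapts to the third order for Theorem~\ref{thm:open_cor_one}: build a finite-dimensional family of controls $\phi_\epsilon(\tau,\eta)$ with $\phi_0\equiv 0$, weight the directions by powers of $\epsilon$ so that the rescaled map $\Phi_\epsilon=\epsilon^{-2}F(\phi_\epsilon)$ extends smoothly across $\epsilon=0$, note that the limit Jacobian at $(0,\eta_0)$ is invertible exactly because $v$ is a regular zero (the $\eta$-partials $d_0F(e_i)$ span $\IM(d_0F)$, while the $\tau$-partials $d_0^2F(v,w_j)$ project onto a basis of $\coker(d_0F)$), and conclude by a uniform stability estimate. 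The uniformity of the covered radius $\delta'$, which you flag as the main point, is correctly handled: since $\Phi_\epsilon$ is a $C^\infty$ family on $K\times[0,\epsilon_0]$, one has $\Phi_\epsilon(0,\eta_0)\to 0$ and $d\Phi_\epsilon\to d\Phi_0$ uniformly on $K$, and the Banach fixed-point proof of the inverse function theorem then yields an $\epsilon$-independent domain and contraction constant. This is precisely the step the paper realizes, in Section~\ref{sec:parti}, via a Brouwer fixed-point estimate on $\widehat\Psi_\epsilon$, and in Section~\ref{sec:partii}, via the implicit function theorem applied to $\Psi(r,s,t;\epsilon)$; your contraction-mapping variant is equivalent and slightly more elementary, though it relies on the limit Jacobian being a genuine isomorphism (which holds here since the parameter count $r+h$ equals $\dim M$), whereas the paper's implicit-function route needs only surjectivity.
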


\noindent Necessary conditions for the existence of a regular zero can be found in \cite{Aru11,AS96}. Sufficient conditions are given by the Morse-index theory, see \cite{AgraBook}. 
The existence of a regular zero is only a necessary condition for the openness of a quadratic form. For example, the map $Q:\R^2\to \R^2$ defined by $Q(x_1,x_2)=(x_1^2-x_2^2,2x_1x_2)$ does not have nontrivial zeros and, in particular, it has no regular zeros, but nevertheless it is   open.

Our objective is to carry this program over to third-order derivatives
and to deduce third-order sufficient conditions for the map
$F$ to be open at the origin.
We first need to define an ``intrinsic'' third differential. 
 Let $P:M\to M$ be any diffeomorphism leaving the point $F(0)$ fixed
and let  $\phi:(-\epsilon,\epsilon)\to U$ be a smooth curve such that
$\phi(0)=0$. 
Let us fix a  local chart for $M$ centered at $F(0)$. Here and hereafter, we assume that $F(0)=0$.
Then, locally in this chart,  we have 
\begin{equation}
\label{eq:thirddiffeps}
\begin{split}
\frac{d^3}{d\epsilon^3}P(F(\phi(\epsilon)))\big|_{\epsilon=0}
&
=d_0P\left(d_0^3F(\dot\phi )+3d_0^2F(\ddot\phi,\dot\phi)+d_0F(\dddot\phi)\right)
\\
&
\quad 
+3d_0^2P\left(  d_0^2F(\dot\phi )+d_0F(\ddot\phi),d_0F(\dot\phi) \right)
\\
&
\quad 
+d^3_0P\left(  d_0F(\dot\phi),d_0F(\dot\phi),d_0F(\dot\phi) \right).
\end{split}
\end{equation}

\noindent
The third derivative in the left hand-side of   \eqref{eq:thirddiffeps}
transforms on $T_0M$ as a tangent vector (i.e., according  to the first differential $d_0P$ only)  as soon as $\dot\phi\in\ker(d_0F)$.  
Moreover, a good  definition of the third differential should only depend   tensorially  on \emph{tangent vectors}. 
This means that  the third derivative
\be
	\frac{d^3}{d\epsilon^3}F(\phi(\epsilon))\big|_{\epsilon=0}=d_0^3F(\dot\phi )+3d_0^2F(\ddot\phi,\dot\phi)+d_0F(\dddot\phi)
\ee
should only depend on $\dot{\phi}$. This happens when $d_0^2 F(\ddot\phi,\dot\phi)=0$ modulo the  image of $d_0 F$.
  These considerations motivate the following definition.

\begin{defi}[Intrinsic third differential]
\label{defi:thirddiff} 
Let $F:U\to M$ be a smooth map.
The \emph{domain of the third   differential} of $F$ 
at $u=0$ is:
	\be\label{eq:setsthird}
		\dom(\mc{D}_0^3F):= 
		\big\{ v\in\ker( d_0F )\mid \pi_{\coker(d_0 F)}\lrb d_0^2F(v,x)  \rrb=0 \,\text{ for all } x\in X  \big \}.
	\ee
	where $d^2_0 F$ is  
computed with respect to any chart centered at $F(0)$.
The \emph{third differential} of    $F$  at   $u=0$ is the cubic map
$\mc{D}_0^3F: \dom(\mc{D}_0^3F) \to \coker(d_0 F)  $  defined by 
\be
		\mc{D}_0^3F(v):=\pi_{\coker(d_0 F)}\big( d_0^3F(v ) \big),
	\ee 
where $d^3_0 F$ is  
computed with respect to any chart centered at $F(0)$
and $\pi_{\coker(d_0F)}$
is the  projection onto $\coker(d_0F)$.
\end{defi}

 \begin{remark}Similarly to the Hessian, these definitions do not depend on the chosen chart.
In particular we stress that, while $d_0^2 F$ depends on the chart, the condition $\pi_{\coker(d_0 F)}\left(d_0^2F(v,x) \right)=0$   for all $ x\in X$ is independent 
of this choice.

To see this, we proceed similarly as in \eqref{eq:thirddiffeps}, and we consider smooth curves $\phi,\psi:(-\varepsilon,\varepsilon)\to U$ such that $\phi(0)=\psi(0)=0$, $\dot{\psi}=v\in \ker(d_0F)$ and $\dot{\phi}=x\in X$. Also, we consider $P:M\to M$ to be any diffeomorphism fixing $F(0)=0$ and we fix a local chart around $0$. Then, by polarization, it is not difficult to see that
\be
	\begin{aligned}
		 \left.\frac{d^2}{d\varepsilon^2}P\left(\frac{F(\phi(\varepsilon)+\psi(\varepsilon))-F(\phi(\varepsilon)-\psi(\varepsilon))}{4}\right)\right|_{\varepsilon=0} 
		= d_0P\left( d_0^2F(v,x)-d_0F\left( \frac{\ddot{\psi}}{2} \right) \right),
	\end{aligned}
\ee
and our assertion follows.
\end{remark} 
 
As for $\mc{D}_0^2F$, see \eqref{eq:hessintr}, for every non-zero linear form $\lambda\in \IM(d_0F)^\perp$ 
and every vector $v\in \dom(\mathcal{D}_0 F)$,
there holds 
\[
\label{eq:thirddd}
\lambda \mc{D}_0^3F(v)=\mc{L}_V\circ \mc{L}_V\circ \mc{L}_V (a\circ F)\big|_0,
\]
where $a\in C^\infty(M)$ is any function such that $d_0a=\lambda$,
$V \in\vect(U)$ is any smooth vector field  such that $V(0)=v$,
and $\mathcal{L}_V$ denotes the Lie derivative along $V$.
Indeed, since by assumption we have $d_0F(v)=0$, the identity   
\begin{align}
		\mc{L}_V\circ \mc{L}_V\circ \mc{L}_V (a\circ F)\big|_0&=d_0^3a\left( d_0F(v)  \right)+ 3d_0^2a\left( d^2_0F(v), d_0F(v)  \right)  +d_0a\left( d_0^3F(v) \right)\\
												 &=\lambda d_0^3F(v).
	\end{align}
holds. In particular,  $\mc{L}_V\circ \mc{L}_V\circ \mc{L}_V (a\circ F)\big|_0$ does not depend upon higher order differentials of $a$ at zero.

\subsection{Open mapping at corank-one critical points}\label{sec:parti}

Assume that  $u=0$ is a critical point of $F$ with corank one,   i.e.,  
$\IM(d_0F)^\perp$ is 1-dimensional and for some non-zero linear form $\lambda$ we have $\IM(d_0F)^\perp = \mathrm{span}\{\lambda\}$. 
 To prove point (i) in Theorem~\ref{thm:open_cor_one}, we adapt an idea used in \cite[Lemma 20.1]{AgraBook},
which  consists in finding a suitable  perturbation 
$\phi_\epsilon: X\to X$ with $\phi_\varepsilon(0)=0$,
so that $F\circ\phi_\epsilon$ is open at 0,
thus implying that $F$ is itself open at 0.

 \begin{proof}[{\bf Proof of Theorem~\ref{thm:open_cor_one} - (i)}]
Since $0$ is a corank-one critical point, there exists an $(m-1)$-dimensional subspace $E\subset X$, with $m = \mathrm{dim}(M)$, such that  
$
X=E\oplus \ker(d_0F).
$  Since  $E$ is  isomorphic to $\IM(d_0F)$ via $d_0F$, we identify it with $\R^{m-1}$.
Namely, we  choose a local chart for $M$ centered at
$F(0)$ and we endow $T_{F(0)}M$ with a scalar product
so that we may identify $T_{F(0)}M$ with $\R^m$ and $\IM(d_0F)$ with $\R^{m-1}$ 
We then fix a basis   $(e_i)_{1\le i\le m-1}$  of $E$.

Let $v\in\dom(\mc{D}_0^3F)$ be such that $\mc{D}_0^3F(v)\neq 0$, and let $z_0,z_1\in E$ to be fixed later. For $\epsilon\geq 0$, we define the map
$\phi_\epsilon:\R^{m-1}\times \R\to X$,
\be\label{eq:phi1}
\phi_\epsilon(x,y):=\frac{\epsilon^3 y^3}{3!}v+\frac{\epsilon^6 y^6}{6!}z_0+\frac{\epsilon^9 y^9}{9!}z_1+\frac{\epsilon^9}{9!}x,
\ee
where $x=(x_1,\dots, x_{m-1})\in\R^{m-1}$ is identified with $(x,0)\in\R^m$.
Notice that $\phi_\epsilon(0)=0$ for every $\epsilon>0$. If the composition  $\Phi_\epsilon := F\circ \phi_\epsilon:\R^{m-1}\times \R\to M$ is open at the origin for some $\epsilon>0$, then $F$ is a fortiori open at the origin.
	
We compute the Taylor expansion at zero of  
$\Phi_\epsilon$ with respect to the parameter $\epsilon$. The only non-trivially zero terms in this expansion are: 
\begin{align} 
\label{eq:deps}
 \Phi_\epsilon^{(3)} (x,y)\big|_{\epsilon=0}&= y^3d_0F(v),
 \\
\label{eq:deps2}
\Phi_\epsilon^{(6)}(x,y)\big|_{\epsilon=0}&=y^6\big(  10d_0^2F(v)+d_0F(z_0)  \big) ,
\\
\label{eq:deps3}
 \Phi_\epsilon^{(9)}(x,y)\big|_{\epsilon=0}&=y^9\lrb  280 d_0^3F(v)+84 d^2_0F(v,z_0)+d_0F(z_1)  \rrb+ d_0F(x). 
\end{align}
The term in the first line is zero  since $v\in \ker(d_0F)$. The term in line \eqref{eq:deps2}  is also zero as soon as we choose $z_0\in E$ such that $d_0F(z_0)=-10d_0^2F(v,v)$. This $z_0$ does exist because   $v\in\dom(\mc{D}_0^3F)$ implies that $d_0^2F(v,v)\in \IM(d_0F)$. Finally,   in line \eqref{eq:deps3} we can choose $z_1\in E$ such that 
\[
	d_0F(z_1)=-84 d^2_0F(v,z_0).
\]
This $z_1$ does exist  because, again, $v\in\dom(\mc{D}_0^3F)$ implies that $d^2_0F(v,z_0)\in \IM(d_0F)$.

Eventually, we see that
$
\Phi_\epsilon^{(9)}(x,y)\big|_{\epsilon=0}= 280y^9d_0^3F(v)+ d_0F(x),
$ 
and this implies that   $\Phi_\varepsilon(x,y)$ admits the expansion 
\[
\label{eq:phieps}
	\Phi_\epsilon(x,y)=\frac{\epsilon^9}{9!} \Phi_\epsilon^{(9)}(x,y)\big|_{\epsilon=0}+R_\epsilon(x,y),
\]
where the remainder term $R_\epsilon(x,y)$ is $O(\epsilon^{10})$ as $\epsilon$ tends to zero. Let us define the function $\Psi_\epsilon:\R^{m-1}\times \R \to X$
\begin{align}
	\Psi_\epsilon 
     (x ,y)=\frac{1}{\varepsilon^9}\Phi_\epsilon(x,y^{1/9}). 
\end{align}
Since $\Psi_\epsilon$ is the composition of  $\Phi_\epsilon$ with a  homeomorphism,   $\Phi_\epsilon$ is open at the origin if so is $\Psi_\epsilon$. 
After   a linear change of coordinates the openness at the origin of $\Psi_\epsilon(x,y)$ reduces  to  the openness of 
$
	\widehat \Psi_\epsilon(x,y)=(x,y)+R_\varepsilon(x,y^{1/9}).
$

Given $r>0$, we denote  
by $B_r \subset \R^m$ the ball of radius $r$   centered at the origin.
We show that  there exists $\delta_0>0$ such that $B_{\delta/2} \subset \widehat \Psi_\epsilon(B_\delta )$
for all   $\delta\in (0,\delta_0)$.
This follows from the following claim: 
\[ \label{CLAMMO}
\lim_{(x,y)\to 0}\frac{R_\epsilon(x,y)}{|x|+|y|^9}=0.
\]
In fact,   \eqref{CLAMMO} implies that there exists $\delta_0>0$ such that for all  $0<\delta<\delta_0$ and for all $(x,y)\in B_{\delta} $    we have 
\[
	|R_\epsilon(x,y^{1/9})|\le\frac{1}{2}(|x|+|y|)\le\frac{\delta}{2}.
\]
Then, given $\xi\in B_{\delta/2} $ and letting $\chi^\xi_\epsilon(x,y):=\xi+(x,y)-\widehat \Psi_\epsilon(x,y)$,   the triangle inequality  implies that $\chi^\xi_\epsilon$ maps $B_\delta$ into itself. It follows by the Brouwer's fixed point theorem 
that $\chi^\xi_\epsilon$ has a fixed point in $B_\delta $
for every $\xi\in B_{\delta/2}$, and the openness of $\widehat \Psi_\epsilon$ follows.

We are   left to show   claim \eqref{CLAMMO}. The role of $y$ and $\epsilon$ in \eqref{eq:phi1} is symmetric, and so the partial derivatives $\frac{\partial^k}{\partial y^k}\Phi_\epsilon(x,y)\big|_{(x,y)=0}$ are computed by the chain rule as in \eqref{eq:deps}, switching  $\epsilon$ and $y$.  As a consequence, we have
\[
\begin{split}
&		
\frac{\partial^k}{\partial y^k}\Phi_\epsilon(x,y)\big|_{(x,y)=0}=0, 
\quad 
k=1,\dots,8,
\\
&
\frac{\partial^9}{\partial y^9}\Phi_\epsilon(x,y)\big|_{(x,y)=0}=280 \epsilon^9 d_0^3F(v,v,v).
\end{split}
\]
Similarly, we see that
\[
	\frac{\partial}{\partial x_i}\Phi_\epsilon(x,y)\big|_{(x,y)=0}=\frac{\epsilon^9}{9!}d_0F(e_i),\ \ i=1,\dots, m-1,
\]
and thus we arrive at the expansion
\be\label{eq:expxy}
	\Phi_\epsilon(x,y)=\frac{\epsilon^9}{9!} d_0F(x)+\frac{\epsilon^9}{9!}280 y^9 d_0^3F(v)+O(|x||y|)+o(|x|+|y|^9),
\ee
where the big-O term $O(|x||y|)$ takes care of all the mixed derivatives in $x$ and $y$, up to the tenth order and it satisfies  $O(|x||y|)=o(|x|+|y|^9)$. 
The  
theorem follows. 
\end{proof}

\subsection{Open mapping at critical points of arbitrary corank} \label{sec:partii}

 We turn   to the case of critical points of corank higher than one, and to the proof of point (ii) in Theorem~\ref{thm:open_cor_one}.
We  begin with adapting to the third order setting the notion of regular zero.

\begin{defi}
Let $F:U\subset X\to M$ be a smooth map.
The \emph{isotropy space} of $\mc{D}_0^2F$  is 
\[
	\iso(\mc{D}_0^2F):=\{u\in \ker(d_0F)\mid \mc{D}_0^2F(u)=0\}.
\]
Given an isotropic vector $w_0\in\iso(\mc{D}_0^2F)$,
we define the \emph{second-order image of $F$ at   $w_0$}   as the   subspace  of $\coker(d_0F)$  
\[
\label{eq:Yw0}
\IM(F,2,w_0)=\IM \big(\mathcal{D}_0^2F(w_0,\cdot)).
\]
Finally, we define   the 
\emph{second-order cokernel of $F$ at $w_0$} as  the quotient  
\[
 \coker(F,2,w_0)=	\coker(d_0 F)/ \IM(F,2,w_0).
\]
\end{defi}

 Note that
we have  $\IM(F,2,w_0)=0$ if and only if $w_0\in\ker(\mc{D}_0^2F)=\{ w_0 \in \ker(d_0 F) \mid  \mc{D}_0^2 F(w_0,\cdot) = 0\}$.

\begin{defi}
\label{defi:w0regzero} Let $w_0\in\iso(\mc{D}_0^2F)$.
A \emph{$w_0$-regular zero} for $\mc{D}_0^3 F$
is an element 
$v\in 
\dom(\mc{D}_0^3F)$ 
such that:
\begin{itemize}
\item [(i)]  $\mc{D}^3_0F(v)=0$; 
\item [(ii)] The linear map 
$
\pi_{\coker(F,2,w_0)}(\mc{D}^3_0F(v,v,\cdot)):\dom(\mc{D}_0^3F)\to \coker(F,2,w_0)
$
is surjective. 
\end{itemize}
Above, $\pi_{\coker(F,2,w_0)}$ is the   projection onto $\coker(F,2,w_0)$,
\end{defi}

  \begin{proof}[{\bf Proof of Theorem~\ref{thm:open_cor_one} - (ii)}]
We fix on $T_{F(0)}M$ a scalar product so that we can regard
all the spaces $\coker(d_0 F)$, $\IM \big(\mathcal{D}_0^2F(w_0,\cdot))$
and $\coker(F,2,w_0)$ as subspaces of $T_{F(0)}M$ with direct sums:
\begin{align*}
T_{F(0)}M
&=\IM(d_0F)\oplus \coker(d_0 F),\\
\coker(d_0 F)&=
\IM( \mc{D}_0  ^2F(w_0,\cdot)) \oplus \coker(F,2,w_0).
\end{align*} 
Let $E_1\subset X$, $E_2\subset \dom(\mc{D}_0^2 F) =\ker(d_0 F)$ and $E_3 \subset \dom(\mc{D}_0^3 F)$ be linear subspaces such that the following mappings are linear isomorphisms: 
\[
\begin{split}
& dF_0 : E_1 \to  \IM(d_0F),
\\
& \mc{D}_0^2(w_0 \cdot): E_2\to \IM( \mc{D}_0  ^2F(w_0,\cdot)  ),
\\
& \mc{D}_0^3 (v_0,v_0,\cdot):E_3\to \coker(F,2,w_0).
\end{split}
\] 
We   identify $E_1 = \R^{m_1}$, $E_2 =\R^{m_2}$, and $E_3 = \R^{m_3}$ with $m_1+m_2+m_3= m$ and with coordinates $r\in \R^{m_1}$, $s\in \R^{m_2}$ and $t\in \R^{m_3}$. 
We also identify $r\in \R^{m_1}$ with $r=(r,0,0)\in\R^m$, and similarly for $s$ and $t$. We denote by 
$\bar e_1,\ldots, \bar e_{m_2}$ a basis for $E_2$, and by 
$e_1,\ldots, e_{m_3}$ a basis for $E_3$.

Let  $\nu ,\zeta, 
\xi ,\mu ,\eta_{i}, \xi_{i},\mu_i, \zeta_{\ell},\eta_{ij}$ and $\zeta_{i\ell}$ be points in  $E_1$ to be fixed later.
For $\epsilon>0$ we define the map $\phi_\epsilon:\R^{m_1}\times \R^{m_2}\times \R^{m_3}\to X$ by:
\begin{align}\label{eq:mapmulti}
	\phi_\epsilon(r,s,t)&= 
	\frac{\epsilon^6}{6!} v_0+\frac{\epsilon^7}{7!} t+\frac{\epsilon^8}{8!} w_0+\frac{\epsilon^{11}}{11!}s+\frac{\epsilon^{12}}{12!} \nu +\frac{\epsilon^{14}}{14!} \xi +\frac{\epsilon^{16}}{16!} \mu +\frac{\varepsilon^{18}}{18!} \zeta+\frac{\epsilon^{19}}{19!}r
	\\
	& + \sum_{\ell=1}^{m_2}\frac{\epsilon^{17}}{17!}s_\ell  \zeta_{\ell}	
	+\sum_{i=1}^{m_3}	t_i\Big( 
	\frac{\epsilon^{13}}{13!} 
\eta_i+\frac{\epsilon^{15}}{15!}
 \xi_{i}+   \frac{\epsilon^{19}}{19!} \mu_i    \Big)   		
	\\
	&
	+\frac{\epsilon^{18}}{18!} \sum_{\ell=1}^{m_2}\sum_{i=1}^{m_3}t_is_\ell \zeta_{i\ell}
	+ \frac{\epsilon^{14}}{14!} \sum_{i,j =1}^{m_3}t_i t_j\eta_{ij}  
	.
\end{align}
Then we consider the composition   $\Phi_\epsilon:=F\circ \phi_\epsilon: \R^{m_1}\times \R^{m_2}\times \R^{m_3}\to M$. To prove that $F$ is open at the origin it is sufficient to show that,   for small $\epsilon>0$, $\Phi_\epsilon$ is open at the origin.  

We compute the derivatives of $\epsilon\mapsto \Phi_\epsilon$ and we evaluate them at $\epsilon =0$. We use the short-hand notation $\Phi=\Phi_\epsilon$ and
$\phi=\phi_\epsilon$. The first non-trivially zero derivative at $\epsilon=0$ is the sixth one:
\[
\Phi^{(6)} = F' [\phi^{(6)}]+O(\epsilon), 
\]
that for $\epsilon=0$ gives  $\Phi^{(6)} (0) =   d_0 F(v_0)=0$ because $v_0\in \dom(\mc{D}_0^2 F)\subset \ker(d_0F)$. For $k=7,\ldots,19$ we have
\[
\Phi^{(k)} = F'[\phi^{(k)}] + \sum_{h=1}^{[k/2]} c_{hk} F'' [ \phi^{(h)}, \phi^{(k-h)}]
+ 
\sum_{\substack{ 1\leq h\leq\ell\leq p
\\
h+\ell+p=k
}} 
c_{ h\ell p} F''' [ \phi^{(h)}, \phi^{(\ell)},\phi^{(p)}]+O(\epsilon),
\]
where $c_{hk}$ and $c_{h\ell p}$ are positive integers. For $k=7,\ldots,11$ we have 	
$
\Phi^{(k)} = F' [\phi^{(k)}]+O(\epsilon).
$
The only non-trivially zero cases are $k=7,8,11$, for which  we have $
\Phi^{(k)}(0)=0$. Indeed, for $k=7$ we have $d_0F (t)=0$   because
$t \in E_ 2\subset \ker(d_0F)$;
for $k=8$ we have $d_0F(w_0)=0$ because  
$w_0 \in \mathrm{Iso}(\mc{D}^2_0F)\subset \ker(d_0F)$; for $k=11$ we have $d_0F(s)=0$ because $s\in E_3 \subset \ker(d_0F)$.

For $k=12,\ldots,17$ we have the following expansions:
\[
\begin{split}
\Phi^{(12)} &= F' [ \phi^{(12)}]+ c_{66} 
F''[ \phi^{(6)},\phi^{(6)}]+ 
O(\epsilon)
\\
\Phi^{(13)} &= F' [ \phi^{(13)}]+ c_{67} 
F''[ \phi^{(6)},\phi^{(7)}]+ 
O(\epsilon)
\\
\Phi^{(14)}& = F' [ \phi^{(14)}]+ c_{68} 
F''[ \phi^{(6)},\phi^{(8)}]+  c_{77} 
F''[ \phi^{(7)},\phi^{(7)}]+
O(\epsilon).
\\
\Phi^{(15)}& = F' [ \phi^{(15)}]+ c_{69} 
F''[ \phi^{(6)},\phi^{(9)}]+  c_{78} 
F''[ \phi^{(7)},\phi^{(8)}]+
O(\epsilon)
\\
\Phi^{(16)}& = F' [ \phi^{(16)}]+ c_{6,10} 
F''[ \phi^{(6)},\phi^{(10)}]+  c_{79} 
F''[ \phi^{(7)},\phi^{(9)}]+ c_{88} 
F''[ \phi^{(8)},\phi^{(8)}]+
O(\epsilon)
\\
\Phi^{(17)} &= F' [ \phi^{(17)}]+ c_{6,11} 
F''[ \phi^{(6)},\phi^{(11)}]+  c_{7,10} 
F''[ \phi^{(7)},\phi^{(10)}]+ c_{89} 
F''[ \phi^{(8)},\phi^{(9)}]+
O(\epsilon).   
\end{split}
\]
The equations $\Phi^{(k)}(0)=0$ lead  to the following list of conditions: 
\begin{align}
\label{12}
&  d_0F(\nu )+ c_{66}   d_0^2F(v_0,v_0) =0,
\\
&
\label{13}  d_0F(\eta_i)+  c_{67}   d_0^2F(v_0,e_i )   =0,\quad i=1,\ldots,m_3,
\\&
\label{14}
d_0F(\xi )+   c_{68}   d_0^2F(v_0,w_0 )=0,
\\&
\label{14bis}
d_0 F(\eta_{ij})  +
c_{77}    d_0^2F(e_i,e_j) =0,\quad i,j=1,\ldots,m_3,
\\
&
\label{15}
d_0F(\xi_{i})+        c_{78}    d_0^2F(e_i,w_0)    =0, \quad i=1,\ldots,m_3,
\\
&
\label{16}
d_0 F(\mu ) +       c_{88}    d_0^2F(w_0,w_0)    =0,
\\
&
\label{17}
d_0 F( \zeta_{\ell} )
+ c_{6,11} d_0F^2(v_0, \bar e_\ell )   =0, \quad \ell=1,\ldots,m_2.
\end{align}
Both \eqref{14} and  \eqref{14bis} origin from $\Phi^{(14)}(0)=0$.

Equation  \eqref{12}
has a solution $\nu \in E_1$ because the vector 
$v_0\in \dom(\mathcal D_0^3 F)$ satisfies $\mc{D}_0 ^2 F(v_0)=0$.   Equation 
\eqref{13}  has a solution $\eta_i\in E_1$ 
because, again,  the points $d_0^2F(v_0,e_i ) $ are in the image of the differential.
For the same reason, there exist solutions $\xi , \eta_{ij}, \xi_{i},\mu \in E_1$ of 
\eqref{14}, 
\eqref{14bis},
\eqref{15}, and  
\eqref{16}.

We study  
\eqref{17}.
Since   
$v_0\in \dom(\mathcal D_0^3 F)$ we   have  
$\pi_{\coker(d_0F)} \lrb d_0^2F(v_0,x)  \rrb=0$ for all $x\in X$.  Then $d_0 F^2(v_0,\bar e_\ell)$ also belongs to the image of the differential  and so there exists a solution $\zeta_{\ell}\in E_1$ to \eqref{17}.

Now we consider the cases $k=18,19$. In these cases, the third differential $F'''$ becomes relevant and we have the following expansions:
\[
\begin{split}
\Phi^{(18)} &= F' [ \phi^{(18)}]+
\sum_{k=6}^ 9 c_{k,18-k} F''[\phi^{(k)},\phi^{(18-k)}]
+c_{666} F'''[\phi^{(6)},\phi^{(6)},\phi^{(6)}] +O(\epsilon), 
\\
\Phi^{(19)} &= F' [ \phi^{(19)}]
+
\sum_{k=6}^ 9 c_{k,19-k} F''[\phi^{(k)},\phi^{(19-k)}] 
+c_{667} F'''[\phi^{(6)},\phi^{(6)},\phi^{(7)}] +O(\epsilon).  
\end{split}
\]
The equation  $\Phi^{(18)}(0) =0$  leads to the following conditions: 
\begin{align}
\label{18bis} 
&    d_0F(\zeta)
+ c_{6,12}   d_0^2F(v_0 ,\nu  )  + c_{666} d_0^3F(v_0,v_0,v_0)
=0,
\\ 
\label{18}
&    d_0F(\zeta_{i\ell})
+ c_{7,11}   d_0^2F(e_i ,\bar e _\ell )  
=0 .
\end{align}
We can fix $\zeta,\zeta_{i\ell}\in E_1$ solving \eqref{18bis}, \eqref{18}.
Here we use the fact that $\mc{D}_0^3F(v_0) = 0$.

Finally, we 
require that $\mu_i\in E_1$ solves the equation
\begin{align} 
\label{19} 
& d_0F(\mu_i )
+  c_{6,13}   d_0^2F(v_0,\eta_i )
+ c_{7,12} d_0^2 F(e_i, \nu )  
=0.
\end{align}
In this way we have 
$
\Phi^{(19)}(0) = d_0F (r) 
+ c_{8,11} d_0^2 F(w_0,s)
+c_{667} d_0^3 F(v_0,v_0, t)  $,
so that  
the map $\Phi_\epsilon$ has the following expansion 
\[
	\Phi_\epsilon(r,s,t)=\epsilon^{19}\left( d_0F(r)+ c_{8,11} d_0^2F(w_0,s)+ c_{667}  d_0^3F(v_0,v_0,t) \right)+O(\epsilon^{20}),
\]
with $ c_{8,11} \neq0$ and $c_{667} \neq0$.
It follows that the map   $\Psi
:\R^{m_1}\times \R^{m_2}\times \R^{m_3}\times \R\to M$
\begin{align}\label{eq:psi}
\Psi(r,s,t;\epsilon) =   \epsilon^{-19}\Phi_\epsilon(r,s,t)
\end{align}
is  of class $C^1$,  with $\Psi(0)=0$ and such that the Jacobian $J_{(r,s,t)}\Psi(0)$ is surjective onto $T_0M$.

By the implicit function theorem, there exists $\epsilon_0>0$ and $C^1$-functions $(r,s,t):(-\epsilon_0,\epsilon_0)\to \R^{m_1}\times \R^{m_2}\times \R^{m_3}$ such that, for every $\epsilon\in(-\epsilon_0,\epsilon_0)$
\begin{itemize}
\item[(i)] $\Psi(r(\epsilon),s(\epsilon),t(\epsilon);\epsilon)=0$, and
\item[(ii)] $J_{(r,s,t)}\Psi(r(\epsilon),s(\epsilon),t(\epsilon);\epsilon)$
is surjective  onto $T_0M$. 
\end{itemize}
This proves that $\Psi_\epsilon$ is open at the origin for small $\epsilon>0$, and eventually that $F$ is open at the origin. 
\end{proof}

Theorem~\ref{thm:open_cor_one} - (ii) reduces the open mapping property for $F$ 
 at $0$ to the existence of $w_0\in\iso(\mc{D}_0^2F)$ such that the   third differential 
\[
\mc{D}_0^3F:\dom(\mc{D}_0^3F)\to   \coker(F,2,w_0) 
\]
admits a $w_0$-regular zero,  
and since the manifold $M$ is finite-dimensional,
it is enough to   consider the case when the source space is finite-dimensional.
 
 Let us recall some facts about cubic maps.
Given a  cubic map $P:\R^N\to \R^n$, for integers $n$ and $N$,  
we denote by  $T:\R^N\times \R^N\times \R^N\to \R^n$  the trilinear map associated with $P$.
Then   there hold the following facts:
\begin{itemize}
	\item[(i)] For   $v\in \R^N$, the differential $d_vP:\R^N\to \R^n$ is the linear mapping given by $d_vP(x)=3T(v,v,x)$, for  $x\in\R^N$.
	
	\item[(ii)] For   $v\in \R^N$, the second differential $d^2_vP:\R^N\times \R^N\to \R^n$ is the vector-valued symmetric bilinear form given by $d^2_vP(x,y)=6T(v,x,y)$, for   $x,y\in\R^N$.
	\item[(iii)] The third differential $d^3P:\R^N\times \R^N\times \R^N\to \R^n$ is the vector-valued symmetric trilinear form given by $d^3P(x,y,z)=6T(x,y,z)$, for   $x,y,z\in\R^N$.
	
	\item[(iv)] The third differential defines the linear map $L:\R^N\to\mathrm{Sym}(\R,N)^n $ into the space of $n$-tuples of $N\times N$ symmetric matrices given by 
	\be\label{eq:L}
		L(x)=d^3P(x,\cdot,\cdot)=6T(x,\cdot,\cdot),\ \ \text{for  }x\in \R^N.
	\ee
	We clearly  have the identity $L(x)=d_x^2P$ as  vector-valued symmetric bilinear maps,  for every $x\in \R^N$.
\end{itemize}

\begin{thm}\label{thm:regzerogen}
Let $P:\R^N\to \R^n$ be a cubic map and assume that:
\begin{itemize}
\item [(i)]  $N\ge n+1$;
\item [(ii)] 
  if $e_1,\ldots, e_N$ denotes the canonical basis of $\R^N$, 
for every non-zero $\lambda\in (\R^n)^*$ the   quadratic forms $Q_i^\lambda:\R^N\to \R$, for  $i=1,\dots, N$, 
\begin{align} Q_i^\lambda(x) 
=\lambda d^3P(e_i,x,x) 
\end{align}
do not have common isotropic vectors $x\neq 0$,
 \end{itemize}
then $P$ has a regular zero.
\end{thm}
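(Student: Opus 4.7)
The plan is to first unwind hypothesis (ii) into a clean condition on $P$, and then close the argument by the Borsuk--Ulam theorem.

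First I would observe that, by the symmetry of the trilinear form $T$ representing $P$, one has
\[
	d^3 P(e_i, x, x) = 6\, T(e_i, x, x) = 2\, d_x P(e_i), \qquad i=1,\ldots,N,
\]
so that $\Span\{d^3 P(e_i, x, x)\}_{i=1}^{N} = \IM(d_x P)$. For fixed $x \neq 0$, the statement that $Q_i^\lambda(x) = 0$ for every $i$ is therefore equivalent to $\lambda \in \IM(d_x P)^\perp$. Hypothesis (ii) then asserts that for every nonzero $x$ one has $\IM(d_x P)^\perp = \{0\}$, that is, \emph{the differential $d_x P$ is surjective at every nonzero $x \in \R^N$}.

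Since a regular zero of $P$ is by definition a point $v$ with $P(v) = 0$ and $d_v P$ surjective, the preceding reformulation shows that \emph{any} nontrivial zero of $P$ is automatically regular. So the theorem reduces to exhibiting some $v \neq 0$ with $P(v) = 0$.

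For this, I would argue by contradiction via Borsuk--Ulam. Being homogeneous of degree three, $P$ is odd: $P(-x) = -P(x)$. If $P^{-1}(0) = \{0\}$, then $x \mapsto P(x)/|P(x)|$ would be a continuous odd map $\Ss^{N-1} \to \Ss^{n-1}$, which by Borsuk--Ulam cannot exist once $N-1 > n-1$, contradicting hypothesis (i). Hence $P$ must vanish at some $v \neq 0$, which is the regular zero we seek.

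I do not anticipate any serious obstacle: the whole argument pivots on the algebraic unwinding of (ii), after which the topological step is routine. The subtle point --- and, I suspect, the reason the authors deem this algebraic version ``less satisfactory'' than its Morse-index counterpart in the quadratic case --- is that condition (ii) is extremely restrictive, essentially amounting to $P$ having no critical points outside the origin. Verifying such a condition on concrete vector-valued cubics (e.g.\ those arising as the third differential of the end-point map) is in general delicate.
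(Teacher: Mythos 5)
Your argument is correct, and it is illuminating to compare it with the paper's. The algebraic core is the same: both proofs exploit the symmetry of the trilinear form $T$ to pass from ``$\lambda$ annihilates $\IM(d_v P)$'' (the failure of regularity at $v$) to ``$v$ is a common isotropic vector of all $Q_i^\lambda$''. The paper phrases this as a contradiction argument using the identity $\lambda T(u,v,w)=\langle u,\lambda L(v)w\rangle$ and ``cycling variables'', whereas you phrase it as a clean reformulation of hypothesis (ii): it says precisely that $d_xP$ is surjective for every $x\neq 0$, so any nontrivial zero is automatically regular. Your formulation is cleaner and arguably the more transparent reading of (ii). The genuine divergence is in the existence step. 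The paper invokes B\'ezout's theorem (citing Shafarevich) to produce a nontrivial real zero of $P$. You instead use the Borsuk--Ulam theorem, observing that a cubic map is odd and that a continuous odd map $\Ss^{N-1}\to\Ss^{n-1}$ cannot exist when $N>n$. Your route is more elementary and, for a \emph{real} system, arguably more robust: B\'ezout in the cited form is a statement over algebraically closed fields, and the existence of a real common zero of the system $P=0$ is exactly the kind of statement one normally pins down (for odd-degree maps) precisely by a Borsuk--Ulam argument. So the two proofs arrive at the same place, but yours replaces the algebraic-geometry input by a purely topological one that applies directly over $\R$.
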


\begin{proof} Since $N\ge n+1$ and $P$ is a cubic map,   $P$ has a non-trivial zero $v\in \R^N$ by the B\'ezout theorem (see, e.g.,~\cite[Theorem 1, Chapter IV \S 2]{Shaf74}). We claim that this zero is regular.

Suppose by contradiction that $v$ is not regular, i.e.,  there exists a non-zero $\lambda\in (\R^n)^*$ such that
\[
\lambda d_vP(x)=3\lambda T(v,v,x)=0,\ \ \text{for }x\in \R^N.
\]
Denoting by  $\langle\cdot,\cdot\rangle$  the scalar product on $\R^N$, we recall the identity (compare with \eqref{eq:L})
\be\label{eq:identitythird}
\lambda T(u,v,w)=\langle u, \lambda L(v)w\rangle, \ \ \text{for  }u,v,w\in\R^N.
\ee
Cycling the variables in \eqref{eq:identitythird}, we deduce that $\lambda T(v,x,v)=\langle v, \lambda L(x)v\rangle=0$ for every $x\in \R^N$, i.e.,  $v$ is a common isotropic vector for the quadratic forms $L(x)$ as $x$ varies in $\R^N$, which contradicts   (ii).
\end{proof}

\begin{remark}
In the case of scalar cubic maps, that is $P:\R^N\to \R$, Theorem~\ref{thm:regzerogen} can be made more precise. Indeed, if $N\ge 2$, one can prove that the following are equivalent:
\begin{itemize}
	\item[(i)] $P$ has a regular zero.
	\item[(ii)] $P$ is not a perfect cube.
	\item[(iii)] The linear map $L:\R^N\to \mathrm{Sym}(\R,N)$ is of rank strictly greater than one.
\end{itemize}
\end{remark}

We go back to the case of a smooth map $F: X\to M$.

\begin{cor}\label{cor:condition}
Let $X$ be a Banach space, $U\subset X$ a neighborhood of $0\in X$, $M$ a smooth manifold, and $F:U\to M$ a smooth mapping. Assume that there exists $w_0 \in \mathrm{Iso}(\mc{D}^2_0F)$ such that:

\begin{itemize}
\item [(i)] $\dim(\IM(F,2,w_0))+\dim(\dom(\mc{D}_0^3F))>\dim(M)$.
\item [(ii)] For every non-zero $\lambda\in \IM(F,2,w_0)^\perp$ and $v\in \dom(\mc{D}_0^3F)$ there exists $x\in \dom(\mc{D}_0^3F)$ such that
	$
			\lambda \mc{D}_0^3F(v,v,x)\neq 0$.
\end{itemize}
Then $F$ is open at the origin. 

\end{cor}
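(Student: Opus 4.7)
The plan is to apply Theorem~\ref{thm:regzerogen} to a cubic map derived from $\mc{D}_0^3F$ in order to extract a candidate for a $w_0$-regular zero, and then invoke a mild modification of Theorem~\ref{thm:open_cor_one}(ii). The natural cubic map to consider is
\[
P : \dom(\mc{D}_0^3F) \to \coker(F,2,w_0), \qquad P(v) := \pi_{\coker(F,2,w_0)}\big( \mc{D}_0^3F(v) \big).
\]
Setting $n := \dim(\coker(F,2,w_0))$, hypothesis~(i) reads
\[
\dim(\dom(\mc{D}_0^3F)) > \dim(M) - \dim(\IM(F,2,w_0)) = \dim(\IM(d_0F)) + n \geq n,
\]
so restricting $P$ to a finite-dimensional subspace of dimension $n+1$ we may view it as a cubic map $\R^{n+1} \to \R^n$ and invoke Theorem~\ref{thm:regzerogen}. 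Identifying $\IM(F,2,w_0)^\perp$ with $\coker(F,2,w_0)^*$, hypothesis~(ii) translates exactly to the non-existence of non-trivial common isotropic vectors for the quadratic forms $Q_i^\lambda = \lambda\, d^3P(e_i,\cdot,\cdot)$, by the symmetry of the trilinear form underlying $\mc{D}_0^3F$.

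Theorem~\ref{thm:regzerogen} therefore produces $v_0 \in \dom(\mc{D}_0^3F)$ with $\mc{D}_0^3F(v_0) \in \IM(F,2,w_0)$ and such that the map $x \mapsto \pi_{\coker(F,2,w_0)}\big( \mc{D}_0^3F(v_0, v_0, x)\big)$ is surjective onto $\coker(F,2,w_0)$. The surjectivity is exactly clause~(ii) of Definition~\ref{defi:w0regzero}. If moreover $\mc{D}_0^3F(v_0) = 0$ in $\coker(d_0F)$, then $v_0$ is a genuine $w_0$-regular zero for $\mc{D}_0^3F$ and Theorem~\ref{thm:open_cor_one}(ii) applies directly. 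In general, one only has $\mc{D}_0^3F(v_0) = \mc{D}_0^2F(w_0, \tilde w)$ for some $\tilde w \in E_2$, so clause~(i) of Definition~\ref{defi:w0regzero} may fail and Theorem~\ref{thm:open_cor_one}(ii) is not immediately applicable.

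To handle this residue, I would revisit the construction of the perturbation $\phi_\epsilon$ in the proof of Theorem~\ref{thm:open_cor_one}(ii) and add an extra term $\tfrac{\epsilon^{10}}{10!}\hat s$, with $\hat s \in E_2$ chosen so that $c_{8,10}\, \mc{D}_0^2F(w_0,\hat s) + c_{666}\, \mc{D}_0^3F(v_0) = 0$. This produces a new contribution $c_{8,10}\, d_0^2F(w_0, \hat s)$ in $\Phi^{(18)}_\epsilon(0)$ that exactly cancels the obstruction $c_{666}\, d_0^3F(v_0, v_0, v_0)$ modulo $\IM(d_0F)$, making solvable the equation that in the original proof required $\mc{D}_0^3F(v_0) = 0$. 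The extra cross-terms introduced at intermediate orders are of the form $d_0^2F(v_0, \hat s)$ and $d_0^2F(e_i, \hat s)$; both lie in $\IM(d_0F)$ since $v_0, e_i \in \dom(\mc{D}_0^3F)$, and can be absorbed by enlarging $\phi_\epsilon$ with further auxiliary vectors in $E_1$. No new terms arise at order $\epsilon^{19}$, so the leading expansion
\[
\Phi_\epsilon^{(19)}(0) = d_0F(r) + c_{8,11}\, d_0^2F(w_0,s) + c_{667}\, d_0^3F(v_0,v_0,t)
\]
is preserved and the implicit function theorem argument of Theorem~\ref{thm:open_cor_one}(ii) concludes unchanged.

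The main obstacle is the bookkeeping in this last step: meticulously verifying that the new $\hat s$-term does not introduce uncancelable contributions at any of the intermediate orders between $10$ and $19$. The saving observations are that $\hat s \in E_2 \subset \ker(d_0F)$ kills all first-order contributions involving $\hat s$, while $v_0, e_i \in \dom(\mc{D}_0^3F)$ forces the relevant second differentials into $\IM(d_0F)$, so that every new obstruction is either trivially zero or can be absorbed by an additional element of $E_1$.
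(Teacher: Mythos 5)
Your proof is correct and, notably, more careful than the paper's own one-sentence argument. Both you and the paper apply Theorem~\ref{thm:regzerogen} to the cubic map $P=\pi_{\coker(F,2,w_0)}\circ\mc{D}_0^3F$ restricted to a finite-dimensional subspace, using hypothesis~(i) to get $N\geq n+1$ and hypothesis~(ii) to exclude common isotropic vectors. However, as you correctly observe, Theorem~\ref{thm:regzerogen} only delivers a vector $v_0$ with $\pi_{\coker(F,2,w_0)}(\mc{D}_0^3F(v_0))=0$, i.e.\ $\mc{D}_0^3F(v_0)\in\IM(F,2,w_0)$, which is strictly weaker than clause~(i) of Definition~\ref{defi:w0regzero} whenever $\IM(F,2,w_0)\neq 0$; the paper jumps directly to ``$v_0$ is a $w_0$-regular zero'' and invokes Theorem~\ref{thm:open_cor_one}(ii), eliding this discrepancy. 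Your repair is sound: insert $\tfrac{\epsilon^{10}}{10!}\hat s$ into $\phi_\epsilon$ with $\hat s\in E_2$ solving $c_{8,10}\,\mc{D}_0^2F(w_0,\hat s)=-c_{666}\,\mc{D}_0^3F(v_0)$, which is possible precisely because $\mc{D}_0^2F(w_0,\cdot):E_2\to\IM(F,2,w_0)$ is an isomorphism and $\mc{D}_0^3F(v_0)\in\IM(F,2,w_0)$. This cancels the obstruction in equation~\eqref{18bis} modulo $\IM(d_0F)$; the new cross-terms $d_0^2F(v_0,\hat s)$ at order $16$ and $d_0^2F(e_i,\hat s)$ at order $17$ lie in $\IM(d_0F)$ since $v_0,e_i\in\dom(\mc{D}_0^3F)$ and are absorbed by additional $E_1$-valued auxiliaries; and nothing new survives at order $19$ since the only candidate pairing of $\phi^{(10)}$ there involves $\phi^{(9)}\big|_{\epsilon=0}=0$. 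In effect you prove and use the mild strengthening of Theorem~\ref{thm:open_cor_one}(ii) in which clause~(i) of Definition~\ref{defi:w0regzero} is weakened to $\pi_{\coker(F,2,w_0)}(\mc{D}_0^3F(v_0))=0$, which is the form of the condition that Theorem~\ref{thm:regzerogen} actually produces and that the Corollary genuinely requires.
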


\begin{proof} We assume without loss of generality that
$\dim(\IM(F,2,w_0))<\dim(M)$ for every $w_0 \in \mathrm{Iso}(\mc{D}^2_0F)$. If there exists 
$w_0 \in \mathrm{Iso}(\mc{D}^2_0F)$ such that
$\dim(\IM(F,2,w_0))=\dim(M)$ then $F$ is open at the origin by   Theorem \ref{prop:regzero2}.  

	By assumptions (i) and (ii), and recalling that $\lambda\in \coker(F,2,w_0)^*=\IM(F,2,w_0)^\perp$, we deduce by Theorem~\ref{thm:regzerogen} that 
	for every non-zero that the mapping $d_0^3F$ has a $\coker(F,2,w_0)$-regular zero $v\in \dom(\mc{D}_0^3F)$.
	Projecting onto $\coker(d_0F)$, we deduce that $v$ is $w_0$-regular zero for $\mc{D}_0^3F$ in the sense of Definition~\ref{defi:w0regzero}, and the claim follows. 
\end{proof}

\begin{remark}\label{rem:unsat} 
	The conclusions of Corollary~\ref{cor:condition} are  unsatisfactory because they are  not easily exploitable in  the study of the end-point map, in particular at critical points of corank higher than one.

	While in the second order analysis  the Morse theory provides, via the algebraic notion of index, 
effective sufficient conditions ensuring the open mapping property, in the third order case we lack 
 a solid algebraic theory describing the invariants of symmetric tensors of order $3$, where not even the concepts of rank and \emph{symmetric} rank necessarily coincide \cite{Shit18}, and the diagonalization process is not clear. This makes it difficult to find effective conditions ensuring the existence of regular zeros for cubic maps.

\end{remark}

\section{Third order analysis of the end-point map}
\label{sec:toan}

In this section
we expand the end-point map and we  compute the precise structure of its third order term.   The computations use the language of chronological calculus for non-autonomous vector fields, that is briefly recalled in the first subsection.

\subsection{Elements of chronological calculus}
\label{sec:chron_calc}

  Let 
 $M$ be a smooth manifold and let 
$ V= (V_t)_{t\in[0,1]}$ be a time-dependent vector field, that is,
a map $M\times[0,1]\to TM$ so that $V(q,t) =V_t(q)\in T_q M$ for every $t\in [0,1]$.

The flow of $V$ 
is the map 
$P:M\times [0,1]\times [0,1]\to M$,
$P(q_0,t_0,t) = P^t_{t_0}(q_0)$, given by evaluating at time $t$ 
the solution to the Cauchy problem:
\begin{align}
\label{eq:Cauchy-pb}
\left\{
\begin{aligned}
\dot q(\tau) &= V_{\tau}(q(\tau)),\\
q(t_0)&=q_0.
\end{aligned}
\right.
\end{align}  
We assume for our purposes that the solution to  \eqref{eq:Cauchy-pb} is defined for every $t\in [0,1]$.  It is enough to assume that
the vector field $V$ is smooth in the space variable 
and locally integrable in the time variable for problem \eqref{eq:Cauchy-pb} to have a unique solution
(see, e.g., \cite[Chapter 2, Theorem 1.1]{CodLev}).
 
We will adopt the point of view
of operatorial calculus. In particular, we interpret  points $q\in M$ as linear functionals on the algebra $C^\infty(M)$, that is as evaluations $q(a) = a(q)$, 
and we interpret  diffemorphisms $B$ of $M$ as
automorphisms of $C^\infty(M)$ defined by the formula  $Ba(q)=a(B(q))$. Finally,   we identify a vector field $V\in \mathrm{Vec} (M)$ with the derivation of the algebra $C^\infty (M)$ given by $a \mapsto V a$. 

The Cauchy problem \eqref{eq:Cauchy-pb} can be reformulated as the following Cauchy problem of operators on $C^\infty(M)$:  
\begin{equation}\label{NUMO}
 \dot P^t_{t_0}  = P^t_{t_0}\circ V_t,\quad P^{t_0}_{t_0} = \mathrm{Id},
\end{equation}
where $\circ$ is the composition of operators on $C^\infty(M)$ acting from left to right, i.e.: 
\be
(q_0\circ \dot P ^t_{t_0}) a = (q_0\circ P^t_{t_0}\circ V_t) a= V_t a(P^t_{t_0}(q_0)) 
\ee
 for every $a\in C^\infty(M)$ and every $q_0\in M$.  
The  characterization \eqref{NUMO} of  $P^t_{t_0}$ 
 motivates the following notation:
\be \label{FLUX}
	\eexp\int_{t_0}^t V_\tau d\tau\, :=P^t_{t_0} ,
\ee
and we call $P^t_{t_0}$ the \emph{right chronological exponential} of $V$.
  Integrating iteratively the differential equation in \eqref{NUMO}, we may formally expand $P^t_{t_0}$ 
in the following Volterra series:
\be
\label{eq:rightcronexp}
	\begin{aligned}
	P^t_{t_0} &=\mathrm{Id}+\sum_{k=1}^\infty \int_{\Sigma_k(t_0,t)} V_{\tau_k}\circ \dots \circ V_{\tau_1}d\tau_k\dots d\tau_1,\ \ &t\ge t_0,\\
	P^t_{t_0} &=\mathrm{Id}+(-1)^k\sum_{k=1}^\infty \int_{\Xi_k(t,t_0)} V_{\tau_k}\circ \dots \circ V_{\tau_1}d\tau_k\dots d\tau_1,\ \ &t< t_0.
	\end{aligned} 
\ee 
where
\be
	\begin{aligned}
		\Sigma_k(t_0,t):=\{(\tau_1,\dots,\tau_k)\in\R^k\mid t_0\le\tau_k\le\dots\le\tau_1\le t\}&\ \ &\text{if }t\geq t_0,\\
		\Xi_k(t,t_0):=\{(\tau_1,\dots,\tau_k)\in\R^k\mid t\le\tau_1\le\dots\le\tau_k\le t_0\}&\ \ &\text{if }t< t_0.
	\end{aligned}
\ee
We  agree that
$\Sigma(0,t) = \Sigma(t)$, $\Xi_k(t,0)=\Xi_k(t)$ and  $\Sigma_k=\Sigma_k(1)$,
that is the $k$-th dimensional simplex.
The series \eqref{eq:rightcronexp}
are to be interpreted as identities of operators on $C^\infty(M)$.
They are never convergent unless $V_t=0$.   
However, considering only finitely many terms
leads to an asymptotic expansion for the chronological exponential
with a precise estimate for the remainder, see \cite[\S 2.4.4]{AgraBook}. 
	
 For fixed $t_0,t \in [0,1]$, $P^t_{t_0}$ is a diffeomorphism of $M$, and we denote its inverse by $Q:M\times[0,1]\times[0,1]\to M$, $Q(q,t_0,t) = Q^t_{t_0}(q)$.
Differentiating in $t$
the operatorial identity $P^t_{t_0}\circ Q^t_{t_0} =\mathrm{Id}$ we obtain $\dot Q^t_{t_0} = - V_t \circ Q^t_{t_0}$, motivating the notation 
\be
\leexp\int_{t_0}^t (-V_\tau) d\tau:=Q^t_{t_0} ,
\ee
and we call $Q^t_{t_0}$ the \emph{left chronological exponential} of $-V_t$.
Notice that in the differential equations for $P^t_{t_0}$ and $Q^t_{t_0}$ the vector fields is composed from the right with $P^t_{t_0}$ and from the left with $Q^t_{t_0}$. Similarly as for $P$, 
the left-chronological exponential 
has the formal expansion:
\be
\label{eq:leftcronexp}\begin{aligned}
Q^t_{t_0}  &=\mathrm{Id}+(-1)^k\sum_{k=1}^\infty
\int_{\Sigma_k(t_0,t)} V_{\tau_1}\circ \dots \circ V_{\tau_k}d\tau_k\dots d\tau_1,\ \ & t\ge t_0,\\
Q^t_{t_0}  &=\mathrm{Id}+\sum_{k=1}^\infty
\int_{\Xi_k(t,t_0)} V_{\tau_1}\circ \dots \circ V_{\tau_k}d\tau_k\dots d\tau_1,\ \ & t< t_0,
\end{aligned}
\ee
and it follows from the definitions that
for any $t_0,t_1\in [0,1]$ there holds the identity:
\begin{equation}
\label{eq:reverse}
	\eexp\int_{t_0}^{t_1}V_\tau d\tau=\leexp\int_{t_1}^{t_0}(-V_\tau) d\tau.
\end{equation}

A tangent vector $v\in T_qM$ can be seen as a linear functional on the algebra $C^\infty(M)$, defined by the formula $v(f)=d_qf(v)$. Given a diffeomorphism $B$ of $M$ we denote by $B_*$ its differential. The tangent vector $B_* v \in T_{B(q) }M$ defines then an operator on $C^\infty(M)$ according to the formula   $B_* v := v\circ B$. Indeed, if $q(t)$ is a differentiable curve such that $q(0)=q$ and $\dot{q}(0)=v$, then for every $a\in C^\infty(M)$ we have:
\be
	(B_*v)a=\frac{d}{dt}a(B(q(t)))\bigg|_{t=0}=\frac{d}{dt}\bigg|_{t=0}\left(q(t)\circ B\right)a=(v\circ B)a.
\ee

Recall next that a  diffeomorphism of $B:M\to M$
acts on tangent vectors and vector fields
via push-forward, namely if $V\in \vect(M)$ we have 
\be
(B_*V)(B(q)) =	B_*(V(q))
\ee
for every $q\in M$. We may interpret this operation in terms of operators
on $C^{\infty}(M)$. The previous identity reads  as the following composition of operators  $q\circ B\circ B_*V = q\circ V\circ B $, that leads to the operatorial definition:
\be
B_* V:= B^{-1}\circ V\circ B.
\ee
For $V\in \vect(M)$ and $B$ a diffeomorphism of $M$, the operator  $(\mathrm{Ad}B)V$ is defined by the formula 
\begin{equation}\label{fox}
(\mathrm{Ad}B)V:= B\circ V\circ B^{-1}. 
\end{equation}
In fact, $\mathrm{Ad}B^{-1}$ acts on vector fields as the push-forward of $B$, and therefore $(\mathrm{Ad}B)V$ coincides with the pull-back of $V$ by $B$.

These notions apply in particular to the maps $\mathrm{Ad}P^t_{t_0}$, allowing for the following  ``infinitesimal'' characterization:
for every $X\in \vect(M)$ there holds
\be\label{eq:derivativeoffield}
	\frac{d}{dt} (\mathrm{Ad}P^t_{t_0}) X
 =(\mathrm{Ad}P^t_{t_0})[V_t,X]
 =:(\mathrm{Ad}P^t_{t_0})\mathrm{ad}(V_t)X,
\ee
where,
by definition, $\mathrm{ad}(Y)X: = [Y, X]$ denotes 
the (left) Lie-bracket as an operator on $\vect(M)$.
Thus, using the argument in \cite[\S 2.5]{AgraBook}, we then see that $\mathrm{Ad}P^t_{t_0}$ is the unique solution to the Cauchy problem on $\vect(M)$ 
\be
	\dot A^t=A^t\circ \mathrm{ad} V_t, \ \ A^{t_0}=\mathrm{Id},
\ee
and this motivates the following notation:
\[
\eexp\int_{t_0}^t\mathrm{ad}V_\tau d\tau:=\mathrm{Ad}\left(\eexp\int_{t_0}^t V_\tau d\tau\right).
\]

\subsection{Expansion of the end-point map}
Let $M$ be a smooth manifold and
let $f_1,\dots, f_k\in \vect(M)$ be smooth vector fields on $M$. Given $u\in
L^1([0,1];\R^k)$ 
we will   use the short-hand notation $f_{u(t)}:=\sum_{i=1}^ku_i(t) f_i$.
Note that $f_u$ is   a time-dependent vector field
as  in the previous subsection.

\begin{defi}
The \emph{end-point map} 
relative to the vector fields
$f_1,\ldots,f_k$ is the map $F:M\times L^1([0,1];\R^k)\to M$ given by
\be
\label{eq:endp}
	F(q_0,u):=F_{q_0}(u):=q_0\circ\eexp \int_0^1 f_{u(t)}\, dt.
\ee
\end{defi}
Recall that we are assuming that the Cauchy problem for $f_{u(t)}$ has a solution defined on the whole interval $[0,1]$.
We perform a perturbation analysis of the end-point
map with respect to the control variable. 
To this aim,
recall that 
by the variation formula in \cite[\S 2.7, (2.28)]{AgraBook} we have:
\begin{align} 
F_{q_0}(u+v)
=q_0\circ\eexp\int_0^1 \big( f_{u(t)}+f_{v(t)} \big) 
dt
= F_{q_0}(u)
\circ \eexp\int_0^1 \mathrm{Ad}\left(\eexp\int_1^t f_{u(\tau)}d\tau \right)f_{v(t)}dt. 
\end{align}
This motivates the following definition.

\begin{defi}
The \emph{perturbation map} relative to the vector fields
$f_1,\ldots,f_k$ is the map
$G:L^1([0,1];\R^k)\times L^1([0,1];\R^k)\times M\to M$
given by	
\begin{equation} \label{ABC}
G(u,v,q_1):= G^u_{q_1}(v)=
q_1
\circ \eexp\int_0^1
\mathrm{Ad}\left(\eexp\int_1^t f_{u(\tau)}d\tau \right)f_{v(t)}dt.
\end{equation}
\end{defi}
The term ``perturbation'' is of course motivated by the fact that,
by the variation formula, there holds:
\begin{align*}
G(u,v,F_{q_0}(u)) = F_{q_0}(u+v),
\end{align*}
so when $q_1=F_{q_0}(u)$ and $v$ is  small,
$G^u_{F_{q_0}(u)}(v)$ is  a small perturbation of $F_{q_0}(u)$.
For $t\in [0,1]$, we define 
the time-dependent vector field
\be\label{eq:gut}
g^{u,t}_{v(t)}:=\mathrm{Ad}\left(\eexp\int_1^t f_{u(\tau)}d\tau \right)f_{v(t)}= 
\eexp\left(\int_1^t\mathrm{ad}f_{u(\tau)}d\tau\right)f_{v(t)}.
\ee
As an operator on $C^\infty(M)$, $G^u_{q_1}(v)$ admits the formal expansion:
\begin{equation}
\label{lio}
G^u_{q_1}(v):=q_1\circ \eexp\int_0^1 g^{u,t}_{v(t)}dt
=q_1\circ \Big(  \mathrm{Id}+\sum_{k=1}^\infty \int_{\Sigma_k}g_{v(\tau_k)}^{u,\tau_k}\circ\dots \circ g_{v(\tau_1)}^{u,\tau_1}d\tau_k\dots d\tau_1\Big).
\end{equation}
 Replacing $v$ by $\varepsilon v$ in \eqref{lio} and dropping the dependence on $q_1$, we introduce the family of diffeomorphisms depending on the parameter $\varepsilon > 0$: 
\be
\label{eq:repad}
	G^u(v\varepsilon)=\mathrm{Id}+\sum_{k=1}^\infty\varepsilon^k\int_{\Sigma_k}g_{v(\tau_k)}^{u,\tau_k}\circ\dots \circ 			g_{v(\tau_1)}^{u,\tau_1}d\tau_k\dots d\tau_1.
\ee

Now we compute a different expansion for $G^u(v\epsilon)$, where the role of the Lie-brackets of $f_1,\ldots, f_k$ is more transparent. We 
can compute the derivative  in $\varepsilon$ of
$G(v\varepsilon)$ 
using \cite[\S 2.8, (2.31)]{AgraBook}:
\be
\label{eq:derivative}
	\frac{\partial}{\partial\varepsilon}G^u(v\varepsilon)=W(v;\varepsilon)\circ G^u(v\varepsilon),
\ee
where the vector field $W(v;\varepsilon)$ is given by the formula 
\be
\label{DELTA}
	W(v;\varepsilon)=\int_0^1\mathrm{Ad}\left( \eexp\int_0^t  \varepsilon g_{v(\tau)}^{u,\tau}d\tau \right)g_{v(t)}^{u,t}dt=\int_0^1\left( \eexp\int_0^t \mathrm{ad} \varepsilon g_{v(\tau)}^{u,\tau}d\tau \right)g_{v(t)}^{u,t}dt.
\ee 
For the definition of the integral $\int_0^1 V_\tau d\tau$ of a non-autonomous vector field $t\mapsto V_t$, we refer to \cite[\S 2.3]{AgraBook}.

Comparing  \eqref{eq:derivative} with    \eqref{eq:leftcronexp} we deduce that:
\be
\label{eq:geps}
G^u(v\varepsilon)=\leexp\int_0^\varepsilon W(v;\eta)d\eta=\mathrm{Id}+\sum_{n=1}^\infty \int_{\Sigma_n(\varepsilon)}W(v;\eta_1)\circ\dots\circ W(v;\eta_n)d\eta_n\dots d\eta_1.
\ee
Thus the formal series in \eqref{eq:repad} and   \eqref{eq:geps} coincide for every $\varepsilon>0$.
From formula \eqref{eq:geps} 
we deduce the following expansion for $G^u(v)$ as an operator on $C^\infty(M)$.
	 
	 \begin{lemma}\label{prop:expbrack}
For every $v\in L^1([0,1];\R^k)$   we have:
\be
\label{eq:expGu}
G^u(v) 
= \mathrm{Id} 
+ d_0G^u(v) 
+\frac {1}{2} d^2_0G^u(v) 
+\frac {1}{6} d^3_0G^u(v)  
+ O(\|v\|_{L^1([0,1];\R^k)}^4),
\ee
where 
\begin{align}
\label{wix} 
d_0G^u(v)& = \int_0^1 g_{v(t)}^{u,t} dt,
\\
\label{wox} 
d^2_0G^u(v)& 
=   \int_{\Sigma_2}[ g^{u,\tau_2}_{v(\tau_2)},g^{u,\tau_1}_{v(\tau_1)} ] d\tau_2 d\tau_1+ 
\left(\int_0^1 g_{v(t)}^{u,t} dt\right)\circ \left(\int_0^1 g_{v(t)}^{u,t} dt\right),
\\
\label{wax} 
d^3_0G^u(v)
&
= 2\int_{\Sigma_3}[ g^{u,\tau_3}_{v(\tau_3)},[ g^{u,\tau_2}_{v(\tau_2)},g^{u,\tau_1}_{v(\tau_1)} ]] d\tau_3 d\tau_2 d\tau_1
\\
&\quad
+2\left(\int_{\Sigma_2}[ g^{u,\tau_2}_{v(\tau_2)},g^{u,\tau_1}_{v(\tau_1)} ] d\tau_2 d\tau_1\right)\circ\left(\int_0^1 g_{v(t)}^{u,t} dt\right) 
\\
&
\quad
+ \left(\int_0^1 g_{v(t)}^{u,t} dt\right)\circ \left(\int_{\Sigma_2}[ g^{u,\tau_2}_{v(\tau_2)},g^{u,\tau_1}_{v(\tau_1)} ] d\tau_2 d\tau_1\right)
\\ &
\quad 
+ \left(\int_0^1 g_{v(t)}^{u,t} dt\right)\circ \left(\int_0^1 g_{v(t)}^{u,t} dt\right)\circ \left(\int_0^1 g_{v(t)}^{u,t} dt\right).
\end{align}
\end{lemma}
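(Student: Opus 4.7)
The plan is to start from the bracket form \eqref{eq:geps} of $G^u(v\varepsilon)$, expand the inner vector field $W(v;\eta)$ as a power series in $\eta$, substitute into the outer left chronological exponential, and then identify the $\varepsilon^k$ coefficient with $\tfrac{1}{k!}d_0^k G^u(v)$ for $k=1,2,3$. This route is natural because the Lie brackets appearing in \eqref{wox}--\eqref{wax} enter automatically through the adjoint action in the definition \eqref{DELTA} of $W(v;\eta)$, while the composition terms will arise from iterating the outer Volterra series.

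Concretely, writing $A(t):=g^{u,t}_{v(t)}$ and applying the right chronological expansion \eqref{eq:rightcronexp} to $\eexp\int_0^t \mathrm{ad}(\eta A(\tau))\,d\tau$, together with the elementary identity $\mathrm{ad}(A(\tau_k))\circ\cdots\circ \mathrm{ad}(A(\tau_1))\,A(t) = [A(\tau_k),[\cdots,[A(\tau_1),A(t)]\cdots]]$, one obtains, after relabeling the simplex $\{0\le\tau_k\le\cdots\le\tau_1\le t\le 1\}$ as $\Sigma_{k+1}$, the expansion $W(v;\eta)=W_0+\eta W_1+\eta^2 W_2+O(\eta^3)$ with
\begin{align*}
W_0 &= \int_0^1 A(t)\,dt,\\
W_1 &= \int_{\Sigma_2}[A(\tau_2),A(\tau_1)]\,d\tau_2\,d\tau_1,\\
W_2 &= \int_{\Sigma_3}[A(\tau_3),[A(\tau_2),A(\tau_1)]]\,d\tau_3\,d\tau_2\,d\tau_1.
\end{align*}

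The next step is to substitute this expansion into \eqref{eq:geps} and collect like powers of $\varepsilon$. The one-simplex term $\int_0^\varepsilon W(v;\eta_1)\,d\eta_1$ contributes $\varepsilon W_0+\tfrac{\varepsilon^2}{2}W_1+\tfrac{\varepsilon^3}{3}W_2$; the two-simplex integral over $\Sigma_2(\varepsilon)$, after integrating the monomials $1,\eta_1,\eta_2$, contributes $\tfrac{\varepsilon^2}{2}\,W_0\!\circ\! W_0+\tfrac{\varepsilon^3}{3}\,W_1\!\circ\! W_0+\tfrac{\varepsilon^3}{6}\,W_0\!\circ\! W_1$; and the three-simplex term contributes $\tfrac{\varepsilon^3}{6}\,W_0\!\circ\! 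W_0\!\circ\! W_0$. Matching against $\mathrm{Id}+\varepsilon\,d_0 G^u(v)+\tfrac{\varepsilon^2}{2}d_0^2 G^u(v)+\tfrac{\varepsilon^3}{6}d_0^3 G^u(v)$ then yields $d_0 G^u(v)=W_0$, $d_0^2 G^u(v)=W_1+W_0\!\circ\! W_0$ and $d_0^3 G^u(v)=2W_2+2\,W_1\!\circ\! W_0+W_0\!\circ\! W_1+W_0\!\circ\! W_0\!\circ\! W_0$, which are precisely \eqref{wix}--\eqref{wax} once $W_0,W_1,W_2$ are replaced by their integral representations.

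The remainder estimate $O(\|v\|_{L^1([0,1];\R^k)}^4)$ follows by applying the standard asymptotic bounds for both chronological exponentials recalled in \cite[\S 2.4.4]{AgraBook}: truncating the expansion of $W(v;\eta)$ beyond order $\eta^2$ and the outer left chronological series beyond three simplices produces remainders each controlled by a power of $\|v\|_{L^1}$, with total degree at least four. The main obstacle is bookkeeping: tracking the simplex reindexings that turn nested brackets over $\{0\le\tau_k\le\cdots\le\tau_1\le t\}$ into integrals over $\Sigma_{k+1}$, verifying the asymmetric coefficients $(2,1)$ for $W_1\!\circ\! W_0$ and $W_0\!\circ\! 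W_1$ (which arise because $\int_0^\varepsilon\eta_1^2\,d\eta_1=\varepsilon^3/3$ while $\int_0^\varepsilon \eta_1^2/2\,d\eta_1=\varepsilon^3/6$), and making sure the left/right chronological conventions conspire correctly. Once the combinatorics are pinned down, the identities are purely algebraic.
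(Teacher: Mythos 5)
Your proposal is correct and follows the same route as the paper: expand the inner vector field $W(v;\eta)$ in powers of $\eta$ via the Volterra series for the adjoint chronological exponential, substitute into the outer left chronological exponential from \eqref{eq:geps}, integrate the resulting monomials over the simplices $\Sigma_n(\varepsilon)$, and match coefficients of $\varepsilon^k$ with $\tfrac{1}{k!}d_0^k G^u(v)$. The only difference from the paper is the $0$-based indexing of the coefficients $W_j$ (the paper writes $W(v;\eta)=\sum_{k\ge 1}\eta^{k-1}W_k(v)$), which is purely notational; the coefficient bookkeeping, including the asymmetric pair $\tfrac{\varepsilon^3}{3}W_1\circ W_0 + \tfrac{\varepsilon^3}{6}W_0\circ W_1$, matches the paper exactly.
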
 
	
	\begin{proof}
		We begin with the expansion of $W(v;\eta)$ in \eqref{DELTA} as a power series in $\eta$. Thanks to \cite[\S 2.5, (2.23)]{AgraBook}, we obtain
		$W(v;\eta)=\sum_{k=1}^\infty \eta^{k-1}W_k(v)$, where
		\be\label{eq:Wterms}
			W_1(v)=\int_0^1 g_{v(t)}^{u,t}dt\ \ \text{and}\ \ W_k(v)=\int_{\Sigma_k}\mathrm{ad}g_{v(\tau_k)}^{u,\tau_k}\circ\dots\circ \mathrm{ad}g_{v(\tau_2)}^{u,\tau_2}(g_{v(\tau_1)}^{u,\tau_1})d\tau_k\dots d\tau_1,\quad k\ge 2.
		\ee
		We then compute the first three terms of the sum in \eqref{eq:geps}:
		\begin{align}
			&\int_{\Sigma_1(\varepsilon)}W(v;\eta_1)d\eta_1=\sum_{h =1}^\infty \frac{\varepsilon^{h }}{h }W_{h }(v),\\
			&\int_{\Sigma_2(\varepsilon)}W(v;\eta_1)\circ W(v;\eta_2)d\eta_2d\eta_1=\sum_{h ,k=1}^\infty\frac{\varepsilon^{h+k}}{(h+k)k}W_{h }(v)\circ W_{k}(v),\\
			&\int_{\Sigma_3(\varepsilon)}W(v;\eta_1)\circ W(v;\eta_2)\circ W(v;\eta_3) d\eta_3d\eta_2d\eta_1
=\sum_{h,k,\ell=1}^\infty\frac{\varepsilon^{h+k+\ell}W_{h}(v)\circ W_{k}(v)\circ W_{\ell}(v)}{(h+k+\ell)(k+\ell)\ell}. 
\end{align}
Then using these formulas in 
\eqref{eq:geps}, 		
we get
\begin{align}
G^u(v\varepsilon)
=\mathrm{Id}&+\varepsilon W_1(v)+\frac{\varepsilon^2}{2}
\big(W_2(v)+W_1(v)\circ W_1(v)\big)
+\frac{\varepsilon^3}{3}
\big(W_3(v)+W_2(v)\circ W_1(v)\big)
\\
& +\frac{\varepsilon^3}{6}
\big( (W_1(v)\circ W_2(v)+W_1(v)\circ W_1(v)\circ W_1(v)\big)+O(\varepsilon^4 
\|v\|_{L^1([0,1];\R^k)}^4
),	
\end{align}
where the estimate on the remainder follows from  Remark~\ref{LEO} below.
From this formula, we can compute the directional derivatives  
\be 
d_0G^u(v)
= \left.\frac{d}{d\epsilon} G^u(v\epsilon)  \right|_{\epsilon=0} ,
\quad  
d^2_0G^u(v) =
\left.\frac{d^2}{d\epsilon^2} G^u(v\epsilon)  \right|_{\epsilon=0} ,
\quad 
d^3_0G^u(v)=
\left.\frac{d^3}{d\epsilon^3} G^u(v\epsilon)  \right|_{\epsilon=0} ,
\ee
obtaining formulas \eqref{wix}, \eqref{wox}, and \eqref{wax}.
\end{proof}
	
\begin{remark} \label{LEO}
Even if Lemma~\ref{prop:expbrack} is enough for our purposes, the computation's method in the proof is algorithmic and permits to determine  the terms of any order in the expansion of $G^u(v\varepsilon)$. Indeed, for $k\ge 1$ we have the formal identity  
\begin{align}
\int_{\Sigma_k(\varepsilon)}W(v;\eta_1)\circ&\dots\circ W(v;\eta_k)d\eta_k\dots d\eta_1= \sum_{h_1,\dots,h_k=1}^\infty \frac{\varepsilon^{h_1+\dots+h_k}W_{h_1}(v)\circ\dots\circ W_{h_k}(v)}{(h_1+\dots+h_k)\dots(h_{k-1}+h_k)h_k}.
\end{align}
\end{remark}

As consequence of Lemma \ref{prop:expbrack}, we obtain
an explicit formula for the intrinsic third differential of $G_{q_1}^u$
(recall Definition \ref{defi:thirddiff}).

\begin{prop}\label{prop:thirdiffendp}
For any   
$v
\in\dom(\mc{D}_0^3G_{q_1}^u)$ and   $\lambda  
\in \IM(d_0G_{q_1}^u)^\perp$ 
we have:  
\be\label{eq:thirdexplicit}
\lambda \mc{D} _0^3 G^u_{q_1}(v)= 
2\int_{\Sigma_3}\left 
\langle \lambda,[ g^{u,\tau_3}_{v(\tau_3)},[ g^{u,\tau_2}_{v(\tau_2)},g^{u,\tau_1}_{v(\tau_1)} ]](q_1) \right\rangle d\tau_3 d\tau_2 d\tau_1.
\ee
\end{prop}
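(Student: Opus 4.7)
The plan is to expand $\lambda\,\mc{D}_0^3 G^u_{q_1}(v)$ using Lemma~\ref{prop:expbrack} and then to prune the resulting four operator summands down to the single triple-bracket integral, by exploiting the hypotheses on $v$ and $\lambda$. Since $\lambda \in \IM(d_0 G^u_{q_1})^\perp$, the linear form descends to the cokernel and one has $\lambda\,\mc{D}_0^3 G^u_{q_1}(v) = \lambda \cdot d_0^3 G^u_{q_1}(v)$, computed in any local chart centered at $q_1$. Fixing such a chart and picking $a \in C^\infty(M)$ with $d_{q_1} a = \lambda$ (for instance linear in the chart), this scalar equals $(d_0^3 G^u(v)\,a)(q_1)$. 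Invoking Lemma~\ref{prop:expbrack} then decomposes $d_0^3 G^u(v)$ as a sum of four terms: the pure triple-bracket $2 W_3(v)$, the two cross-compositions $2 W_2(v) \circ W_1(v)$ and $W_1(v) \circ W_2(v)$, and the triple product $W_1(v)^{\circ 3}$, in the shorthand of the proof of that lemma.

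Next, I use that $v \in \dom(\mc{D}_0^3 G^u_{q_1}) \subset \ker(d_0 G^u_{q_1})$, which forces the vector field $W_1(v) = \int_0^1 g^{u,t}_{v(t)}\, dt$ to vanish at $q_1$. For any operator $X$ acting as a derivation on $C^\infty(M)$, the evaluation $(W_1(v) \circ X)(a)(q_1) = \sum_i (W_1(v))_i(q_1)\,\partial_i(X a)(q_1)$ vanishes identically, and this kills the two summands $W_1 \circ W_2$ and $W_1^{\circ 3}$. The remaining term $2 W_3(v)$ is a genuine vector field, namely the simplex integral of the triple Lie bracket $[g^{u,\tau_3}_{v(\tau_3)}, [g^{u,\tau_2}_{v(\tau_2)}, g^{u,\tau_1}_{v(\tau_1)}]]$, and its pairing with $\lambda$ at $q_1$ yields precisely the right-hand side of \eqref{eq:thirdexplicit}.

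The delicate summand is the cross term $2 W_2(v) \circ W_1(v)$. Applied to $a$ and evaluated at $q_1$, it takes the form $2 \sum_i (W_2(v))_i(q_1)\,\partial_i(W_1(v) a)(q_1)$; since $W_1(v)(q_1) = 0$, the second-order derivatives of $a$ drop out and the expression simplifies to $2\,\langle \lambda, [W_2(v), W_1(v)](q_1) \rangle$, because the directional derivative of a vector field along a zero of it coincides with the Lie bracket value. The main obstacle is then to show that this commutator contribution vanishes in pairing with $\lambda$. To handle it I would expand $[W_2(v), W_1(v)]$ as a triple integral over $\Sigma_2 \times [0,1]$ of $[[g^{u,\tau_2}, g^{u,\tau_1}], g^{u,t}]$, split the domain according to the position of the free time $t$ relative to $\tau_1, \tau_2$, and apply the Jacobi identity in each region in order to rewrite the outcome as a combination of triple-bracket simplex integrals over $\Sigma_3$. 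This combination is then matched with $\lambda \cdot d_0^2 G^u_{q_1}(v, x)$ for an auxiliary control $x \in L^1([0,1];\R^k)$ chosen so that $W_1(x)(q_1) = W_2(v)(q_1)$; such an $x$ exists thanks to the inclusion $\dom(\mc{D}_0^3 G^u_{q_1}) \subset \iso(\mc{D}_0^2 G^u_{q_1})$, which guarantees $W_2(v)(q_1) \in \IM(d_0 G^u_{q_1})$. The defining vanishing property of $\dom(\mc{D}_0^3 G^u_{q_1})$ then forces the cross term to disappear, leaving $2 W_3(v)$ as the sole surviving contribution and producing the claimed formula \eqref{eq:thirdexplicit}.
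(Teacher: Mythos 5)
Your decomposition and the treatment of the two harmless summands are exactly in line with the paper's own argument: Lemma~\ref{prop:expbrack} gives $d_0^3 G^u(v)=2W_3+2W_2\circ W_1+W_1\circ W_2+W_1^{\circ 3}$, and $W_1(v)(q_1)=0$ (i.e.\ \eqref{NUCL}) kills $(q_1\circ W_1\circ W_2)a$ and $(q_1\circ W_1^{\circ 3})a$. You also correctly identify the cross term $2\,W_2\circ W_1$ as the delicate one and compute $(q_1\circ W_2\circ W_1)a=\langle\lambda,[W_2,W_1](q_1)\rangle$ using $W_1(v)(q_1)=0$. All of this is right, and you are more explicit than the paper, which disposes of the third-order terms with a terse ``similarly''.

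The gap is in the last step, and it is a real one. You propose to rewrite $[W_2(v),W_1(v)]$ as a combination of triple-bracket simplex integrals over $\Sigma_3$ and then ``match'' it with $\lambda\,d_0^2G^u_{q_1}(v,x)$; but the latter is a bilinear object built from \emph{double} brackets and one $W_1(x)\circ W_1(v)$ composition, not from triple brackets, so the two expressions do not match term by term. The more workable version of your idea is the substitution $[W_2(v),W_1(v)](q_1)=[W_1(x),W_1(v)](q_1)$, valid once $W_1(x)(q_1)=W_2(v)(q_1)$; but then the defining property of $\dom(\mc D_0^3G^u_{q_1})$ only yields the \emph{relation}
\[
\langle\lambda,W_2(v,x)(q_1)\rangle+\tfrac12\langle\lambda,[W_1(x),W_1(v)](q_1)\rangle=0 ,
\]
where $W_2(v,x)$ denotes the polarized double-bracket simplex integral. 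This identifies $\langle\lambda,[W_2,W_1](q_1)\rangle$ with $-2\langle\lambda,W_2(v,x)(q_1)\rangle$; it does \emph{not} show that either side is zero. In fact, projecting the domain condition onto $\lambda$ shows that it is equivalent to $\int_{\Sigma_2}\langle\lambda,[g^{u,\tau_2}_{x(\tau_2)},g^{u,\tau_1}_{v(\tau_1)}](q_1)\rangle\,d\tau_2\,d\tau_1=0$ for all $x$, which controls the integral over the ordered region $\tau_2\le\tau_1$ with $x$ in the earlier slot, but leaves the complementary region unconstrained; that missing half is exactly what would be needed to conclude $\langle\lambda,[W_1(x),W_1(v)](q_1)\rangle=0$. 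So the claim that the cross term ``disappears'' does not follow from the argument as you have set it up, and this step needs a genuinely new idea (or an extra observation that the paper itself leaves implicit).
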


	\begin{proof}
		Let $v\in\dom(\mc{D}_0^3G_{q_1}^u)$ and   $a\in C^\infty(M)$ be such that $a(q_1)=0$ and $d_{q_1}a=\lambda$. 
		Since $\dom(\mc{D}_0^3G_{q_1}^u)\subset \mathrm{ker}(d_0 G_{q_1}^u)$, we deduce that 
\be
\label{NUCL}
	d_0G_{q_1}^u(v)= q_1\circ \int_0^1 g_{v(\tau_1)}^{u,\tau_1} d\tau_1=0.
\ee
By the definition of the third differential and by a computation similar to
\eqref{eq:thirddiffeps}
we have 
\be
\left.\frac{d^3}{d\varepsilon^3} a(G_{q_1}^u(v\varepsilon))\right|_{\varepsilon=0}=\lambda\mc{D}_0^3G_{q_1}^u(v).
\ee
We used 
\eqref{NUCL} to prove that 
the terms involving second and third order derivatives of $a$ are zero.
Moreover, as 
 $v\in\dom(\mc{D}_0^3G_{q_1}^u)$
we also have   
\be
	\left.\frac{d^2}{d\varepsilon^2} a(G_{q_1}^u(v\varepsilon))\right|_{\varepsilon=0}=\lambda\mc{D}_0^2G_{q_1}^u(v)=0.
\ee
Returning to the chronological notation, we have to expand to the third order the expression
\be
(G^u_{q_1}(v ))a=\left(q_1\circ\eexp\int_0^1 g^{u,t}_{v(t)}dt\right) a.
\ee
Comparing \eqref{eq:expGu} with the expansion provided in \eqref{lio}, we have to calculate:
\begin{equation}\label{eq:chronexp}
\begin{aligned}
2&\left(q_1\circ \int_{\Sigma_2}g_{v(\tau_2)}^{u,\tau_2}\circ g_{v(\tau_1)}^{u,\tau_1}d\tau_2 d\tau_1\right)a,\ \ \text{and}\\
6&\left(q_1\circ\int_{\Sigma_3}g_{v(\tau_3)}^{u,\tau_3}\circ g_{v(\tau_2)}^{u,\tau_2}\circ g_{v(\tau_1)}^{u,\tau_1}d\tau_3d\tau_2 d\tau_1\right)a.
\end{aligned}
\end{equation}

From  formula \eqref{wox} in Proposition~\ref{prop:expbrack} we obtain
\be\label{eq:expsec}
	\begin{aligned}
		\left(d_0^2G_{q_1}^u(v)\right)a=&2\left(q_1\circ\int_{\Sigma_2}g_{v(\tau_2)}^{u,\tau_2}\circ g_{v(\tau_1)}^{u,\tau_1}d\tau_2 d\tau_1\right)a\\ =& \int_{\Sigma_2}\left\langle \lambda,[ g^{u,\tau_2}_{v(\tau_2)},g^{u,\tau_1}_{v(\tau_1)} ](q_1)\right\rangle d\tau_2 d\tau_1  +d_{q_1}^2a \left(\int_0^1 g_{v(t)}^{u,t}(q_1) dt, \int_0^1 g_{v(t)}^{u,t}(q_1) dt\right)=0.
	\end{aligned}
\ee
Indeed, since $v\in\dom(\mc{D}_0^3G_{q_1}^u)$, the second term is zero by \eqref{NUCL}, and moreover 
\be\label{SECO}
	\frac{1}{2}\int_{\Sigma_2}[ g^{u,\tau_2}_{v(\tau_2)},g^{u,\tau_1}_{v(\tau_1)} ](q_1) d\tau_2 d\tau_1\in\IM(d_0 G_{q_1}^u),
\ee
so that the dual product with $\lambda\in\IM(d_0G_{q_1}^u)^\perp$ cancels also the first one. 

By \eqref{wax}, \eqref{NUCL} and $\lambda \in \IM(d_0G_{q_1}^u)^\perp$, for the last term in \eqref{eq:chronexp} we similarly obtain the identity
\be
\begin{aligned}
\left( d_0^3G_{q_1}^u(v) \right)a=&6\left(q_1\circ\int_{\Sigma_3}g_{v(\tau_3)}^{u,\tau_3}\circ g_{v(\tau_2)}^{u,\tau_2}\circ g_{v(\tau_1)}^{u,\tau_1}d\tau_3d\tau_2 d\tau_1\right)a\\=&2\int_{\Sigma_3}\left\langle \lambda,[ g^{u,\tau_3}_{v(\tau_3)},[ g^{u,\tau_2}_{v(\tau_2)},g^{u,\tau_1}_{v(\tau_1)} ]](q_1) \right\rangle d\tau_3 d\tau_2 d\tau_1,
\end{aligned}
\ee
whence the thesis follows.
\end{proof} 

\begin{remark}\label{rem:shuf} The representation formula \eqref{eq:thirdexplicit} for  $\mc{D}_0^3G_{q_1}^u(v)$ in terms of Lie brackets is not unique, and a different representation can be obtained in the following way. 
If we compute the derivative of $\epsilon\mapsto G^u(v\varepsilon)$ according to \cite[\S 2.8, (2.32)]{AgraBook}, we find
\be\label{DALLO}
\frac{\partial}{\partial\varepsilon}G^u(v\varepsilon)=G^u(v\varepsilon)\circ \widetilde{W}(v;\varepsilon), 
\ee
where
\be\label{DILLO}
\widetilde{W}(v;\varepsilon):=\int_0^1\left( \eexp\int_1^t\mathrm{ad}\varepsilon g_{v(\tau)}^{u,\tau}d\tau \right)g_{v(t)}^{u,t}dt.
\ee
Note that the composition order of   $G^u(v;\varepsilon)$ and $\widetilde{W}(v;\varepsilon)$ in \eqref{DALLO}
is reversed compared to \eqref{eq:derivative}. Since, by \eqref{eq:reverse}, we have
\be
	\begin{aligned}
	\eexp\int_1^t\mathrm{ad}\varepsilon g_{v(\tau)}^{u,\tau}d\tau&=\mathrm{Ad}\left( \eexp\int_1^t \varepsilon g_{v(\tau)}^{u,\tau}d\tau \right)\\ &=\mathrm{Ad}\left( -\leexp\int_t^1 \varepsilon g_{v(\tau)}^{u,\tau}d\tau \right)=\eexp\int_t^1-\mathrm{ad}\varepsilon g_{v(\tau)}^{u,\tau}d\tau,
	\end{aligned}
\ee
the expansion in Volterra series of $\widetilde{W}(v;\varepsilon)$ is
	\be\label{eq:Wterms2}
		\widetilde{W}(v;\varepsilon)=\int_0^1 g_{v(t)}^{u,t}dt+\sum_{k=2}^\infty (-\varepsilon)^{k-1}\int_{\Sigma_k}\mathrm{ad}g_{v(\tau_1)}^{u,\tau_1}\circ\dots\circ \mathrm{ad}g_{v(\tau_{k-1})}^{u,\tau_{k-1}}(g_{v(\tau_k)}^{u,\tau_k})d\tau_k\dots d\tau_1.
	\ee
	In \eqref{eq:Wterms2}, the order of the vector fields in the commutator   is  reversed  with respect to \eqref{eq:Wterms}.  
	Our computation  also yields the identity
	\begin{align}
		\int_{\Sigma_3}[ g^{u,\tau_3}_{v(\tau_3)},[ g^{u,\tau_2}_{v(\tau_2)},g^{u,\tau_1}_{v(\tau_1)} ]] d\tau_3 d\tau_2 d\tau_1&=\int_{\Sigma_3}[ g^{u,\tau_1}_{v(\tau_1)},[ g^{u,\tau_2}_{v(\tau_2)},g^{u,\tau_3}_{v(\tau_3)} ]] d\tau_3 d\tau_2 d\tau_1\\ &+\frac{1}{2}\left[ \int_0^1 g_{v(t)}^{u,t} dt,\int_{\Sigma_2}[ g^{u,\tau_2}_{v(\tau_2)},g^{u,\tau_1}_{v(\tau_1)} ] d\tau_2 d\tau_1 \right],
	\end{align}
thanks to which we may obtain another expression for
$\lambda \mc{D} _0^3 G^u_{q_1}(v)$. 	

Even though the representation for the third differential  is not unique,  
for any $v\in \dom(\mc{D}_0^3G^u_{q_1})$ and $\lambda\in \IM(d_0G^u_{q_1})^\perp$ we have the identity:
\be\label{eq:tworep}
\int_{\Sigma_3}\langle \lambda,[ g^{u,\tau_3}_{v(\tau_3)},[ g^{u,\tau_2}_{v(\tau_2)},g^{u,\tau_1}_{v(\tau_1)} ]](q_1)\rangle d\tau_3 d\tau_2 d\tau_1=  \int_{\Sigma_3}\langle \lambda,[ g^{u,\tau_1}_{v(\tau_1)},[ g^{u,\tau_2}_{v(\tau_2)},g^{u,\tau_3}_{v(\tau_3)} ]](q_1)\rangle d\tau_3 d\tau_2 d\tau_1.
\ee
For the second differential, the two series in \eqref{eq:Wterms} and \eqref{eq:Wterms2} produce the same formula, that was already established e.g.~in \cite[\S 20.3]{AgraBook}. 
For further discussions concerning the algebra of all  representations for the $k$th differential we refer to \cite{AGShuffle}.

\end{remark}

	\section{Third order necessary conditions for singular length-minimizers}
	
	\label{QUA}

	We use the Taylor formula  for the end-point map obtained in Section \ref{sec:toan},  
	in connection with our open mapping results, to get third-order necessary conditions satisfied by strictly singular length-minimizers. 
	
	Let $f_1,\ldots, f_k \in \mathrm{Vec}(M)$ be smooth vector fields on the manifold $M$ spanning the distribution $\Delta$ and satisfying the H\"ormander condition \eqref{eq:horm}. We denote by  $X = L^1([0,1];\R^k)$  the space of controls and by $J:X\to [0,\infty)$, $J(u) = \| u\|_{L^1([0,1];\R^k)}$   the length-functional.
	For a fixed $q_0\in M$, we consider the end-point map $F= F_{q_0}: X\to M$. The extended end-point map is the map $\mathcal F: X\to M\times\R$
	given by $
		\mathcal F (u) = (F(u), J(u))$. 
		
	\begin{defi}\label{defi:stricts}	
		A critical point  $u\in X$ of $\mathcal F$ is {\em regular} (resp.~{\em singular}) if there exists $(\lambda,\lambda_0)\in 
	\IM(d_u\mathcal F  )^\perp
	\subset T_{F(u)}^*M\times \R$ such that $\lambda_0\ne 0$ (resp. $\lambda_0= 0$). A critical point  $u\in X$ is {\em strictly} singular if, for every $(\lambda,\lambda_0)\in 
	\IM(d_u\mathcal F  )^\perp$, $\lambda_0=0$. An extremal curve $\gamma$ is regular (resp.~singular, strictly singular), if its associated control $u$ is regular (resp.~singular, strictly singular).
	\end{defi}
	If $u$ is strictly singular, the length-coordinate is  covered by $\IM(d_u\mathcal F )$ and thus the intrinsic second  and third differentials of the extended end-point map $\mathcal F$ coincide with the ones of end-point map $F$ itself.

	Let $ q_1=F_{q_0}(u)$ be the final point and, as in formula \eqref{ABC},   define $G^u_{q_1}: X\to M$ letting $G^u_{q_1}(v) = F_{q_0}(u+v)$.
	In this section we omit in $F$ and $G$ the subscripts $q_0,q_1$, and the superscript $u$.
	The openness of $\mathcal F$ at $u$ is thus further reduced to the openness of $G$ at $0$. By construction, we have the following identities
	\[
	  d_0 G = d_u F,\quad
	  \mc{D}_0^2 G  = \mc{D}_u^2
 F,\quad
 	  \mc{D}_0^3 G  = \mc{D}_u^3
 F.
 \]

	Thanks to Proposition~\ref{prop:thirdiffendp}, given $\lambda\in \IM(d_0G)^\perp$ the trilinear map $\lambda T:\dom(\mc{D}_0^3G)^3  \to\R$ associated with $\lambda \mc{D}_0^3G $ is given by:
	\be\label{eq:trilinear}
			\lambda T (v_1,v_2,v_3)=\frac{1}{3}\sum_{\stackrel{1\le i,j,k\le 3}{i\ne j,j\ne k, i\ne k}}\int_{\Sigma_3}\left\langle \lambda,[g^{u,\tau_3}_{v_i(\tau_3)},[g^{u,\tau_2}_{v_j(\tau_2)},g^{u,\tau_1}_{v_k(\tau_1)}]](q_1) \right\rangle d\tau_3d\tau_2d\tau_1.
	\ee
	Corollary~\ref{cor:condition} specializes as follows.

	\begin{prop}\label{prop:conditionGu}
		Assume that there exists $w_0\in \iso(\mc{D}_0^2G )$ such that:
		\begin{itemize}
		\item[(i)]  
		$\dim(\dom(\mc{D}_0^3G ))+\dim(\IM(G ,2,w_0))>\dim(M)$;
		\item[(ii)] For every non-zero $\lambda\in \IM(G ,2,w_0)^\perp$ and $v\in \dom(\mc{D}_0^3G )$ the real-valued map 
		\[
		\dom(\mc{D}_0^3G )\ni x\mapsto \lambda T  (v,v,x)
		\]
		is not the zero mapping.
		\end{itemize}
		 Then $G $ is open at zero. 
	\end{prop}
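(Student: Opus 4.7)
The plan is to recognize Proposition~\ref{prop:conditionGu} as a direct specialization of Corollary~\ref{cor:condition} to the map $G=G^u_{q_1}: X\to M$, with $X=L^1([0,1];\R^k)$ a Banach space and $M$ a smooth finite-dimensional manifold. Since $G(0)=q_1$ and $G$ is smooth (the dependence on the control being $C^\infty$ by the standard theory of ODEs with smooth parameters, as recalled in Section~\ref{sec:chron_calc}), Corollary~\ref{cor:condition} applies in principle, so the proof reduces to checking that the two sets of hypotheses coincide.

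Condition (i) of the Proposition is literally condition (i) of Corollary~\ref{cor:condition} with $F$ replaced by $G$, so nothing is to be done there. For condition (ii), I need to identify the trilinear form $\lambda T$ displayed in \eqref{eq:trilinear} with the symmetric trilinear form that polarizes the cubic map $\lambda \mc{D}_0^3 G : \dom(\mc{D}_0^3 G)\to \R$. Recalling the bookkeeping preceding Theorem~\ref{thm:regzerogen} (namely $d^3 P = 6T$ for any cubic map $P$ with associated symmetric trilinear form $T$), the explicit formula in Proposition~\ref{prop:thirdiffendp} gives
\[
\lambda\mc{D}_0^3 G(v) = 2\int_{\Sigma_3}\!\bigl\langle\lambda,[g^{u,\tau_3}_{v(\tau_3)},[g^{u,\tau_2}_{v(\tau_2)},g^{u,\tau_1}_{v(\tau_1)}]](q_1)\bigr\rangle\,d\tau_3 d\tau_2 d\tau_1,
\]
and the six-fold symmetrization over permutations written in \eqref{eq:trilinear} is precisely the polarization of this cubic expression (up to the harmless positive combinatorial factor $1/3$), so that $\lambda T(v,v,x)$ is a nonzero constant multiple of $\lambda \mc{D}_0^3 G(v,v,x)$. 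Hence the statement ``$x\mapsto \lambda T(v,v,x)$ is not the zero mapping'' is equivalent to ``there exists $x\in\dom(\mc{D}_0^3 G)$ with $\lambda\mc{D}_0^3 G(v,v,x)\ne 0$'', which is exactly condition (ii) of Corollary~\ref{cor:condition}.

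With the dictionary in place, Corollary~\ref{cor:condition} produces a $w_0$-regular zero for $\mc{D}_0^3 G$, and by Theorem~\ref{thm:open_cor_one}(ii) the map $G$ is open at the origin, which is the conclusion. The only genuine verification needed is the polarization identification in the previous paragraph; everything else is a direct rewriting. No other obstacle is expected, since the finite-dimensionality assumption in Theorem~\ref{thm:regzerogen}, invoked by Corollary~\ref{cor:condition}, is automatically satisfied by restricting attention to finite-dimensional subspaces of $\dom(\mc{D}_0^3 G)$ realizing the dimensional inequality in (i), exactly as in the proof of Corollary~\ref{cor:condition}.
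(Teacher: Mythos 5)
Your proposal is correct and follows exactly the paper's (unwritten, one-line) argument: the paper introduces $\lambda T$ in \eqref{eq:trilinear} precisely as the polarization of $\lambda\mc{D}_0^3 G$, so that Proposition~\ref{prop:conditionGu} is a verbatim restatement of Corollary~\ref{cor:condition} for $F=G$, and your check that $\lambda T(v,v,\cdot)$ being nonzero is the same as the existence of $x$ with $\lambda\mc{D}_0^3G(v,v,x)\ne0$ is the only detail worth spelling out.
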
 
		
		As a consequence we have the following corollary,   that is of interest when
		 $\coker(G,2,w_0)\neq 0$:

	\begin{cor} Let $u$ be the control of a strictly singular length-minimizing curve.   Then, for every $w_0\in \iso(\mc{D}_0^2G)$  
	 one  of the following holds:
		\begin{itemize}
			\item[(i)]  $\dim(\dom(\mc{D}_0^3G))+\dim(\IM(G ,2,w_0))\le \dim(M)$, or
			\item[(ii)] there exist a non-zero covector $\lambda\in \IM(G ,2,w_0)^\perp$ and $v\in \dom(\mc{D}_0^3G )$ such that 
		$
				\lambda T (v,v,x)=0
		$
			for every $x\in \dom(\mc{D}_0^3G )$.
		\end{itemize}
	\end{cor}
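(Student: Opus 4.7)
The corollary is the contrapositive of Proposition~\ref{prop:conditionGu}, combined with the standard fact that the extended end-point map cannot be open at a length-minimizing control. The plan is as follows.

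First, I would argue by contradiction. Suppose the conclusion fails, so there exists some $w_0\in\iso(\mc{D}_0^2 G)$ for which both (i) and (ii) are false. The negation of (i) is the strict inequality $\dim(\dom(\mc{D}_0^3G))+\dim(\IM(G,2,w_0))>\dim(M)$, while the negation of (ii) asserts that for every non-zero $\lambda\in\IM(G,2,w_0)^\perp$ and every $v\in\dom(\mc{D}_0^3G)$, the linear functional $x\mapsto \lambda T(v,v,x)$ on $\dom(\mc{D}_0^3G)$ is not identically zero. These are exactly hypotheses (i) and (ii) of Proposition~\ref{prop:conditionGu}, which therefore yields openness of $G$ at the origin, equivalently of $F$ at $u$.

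Next, I would upgrade the openness of $F$ at $u$ to that of the extended end-point map $\mathcal F=(F,J)$ at $u$. The key point is strict singularity, which forces $\IM(d_u\mathcal F)^\perp=\IM(d_uF)^\perp\times\{0\}$; as the authors already remark after Definition~\ref{defi:stricts}, this identifies the cokernel, the intrinsic Hessian and the intrinsic third differential of $\mathcal F$ at $u$ with those of $F$ at $u$. Hence the same vector $w_0$ and the same conditions witness openness of $\mathcal F$ at $u$ via Proposition~\ref{prop:conditionGu} applied to the perturbation map associated with $\mathcal F$.

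Finally, openness of $\mathcal F$ at $u$ contradicts length-minimality: the image of any $L^1$-neighborhood of $u$ under $\mathcal F$ contains an open neighborhood of $(F(u),J(u))$ in $M\times\R$, and in particular contains a point of the form $(F(u),J(u)-\epsilon)$ for some $\epsilon>0$, which produces a nearby admissible control with the same endpoint and strictly smaller length. The only delicate step is the transfer of the open mapping conclusion from $F$ to $\mathcal F$, but strict singularity handles this automatically through the identification of cokernels; everything else is a direct invocation of Proposition~\ref{prop:conditionGu}.
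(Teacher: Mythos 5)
Your proposal is correct and matches the paper's (implicit) argument: the paper presents the corollary with no separate proof, just as the contrapositive of Proposition~\ref{prop:conditionGu} together with the preceding observation that, for strictly singular controls, the intrinsic invariants of $\mathcal F$ and $F$ agree and hence openness of $G$ at $0$ forces openness of $\mathcal F$ at $u$, contradicting length-minimality. You correctly isolate the one nontrivial ingredient — the transfer from $F$ to $\mathcal F$ via strict singularity — and cite it where the paper does; no gap beyond what is already implicit in the paper's own phrasing.
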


	For strictly singular length-minimizers of corank one, the negation of Theorem~\ref{thm:open_cor_one} provides a more refined criterion. Indeed, its contrapositive translates into a pointwise condition as soon as the subspace $\dom(\mc{D}_0^3G )$ is  sufficiently large.

	Let us first recall the  construction of adjoint curves. Let $\gamma:[0,1]\to M$ be an admissible  curve with control $u$, with $\gamma(0) = q_0$ and $\gamma(1) = q_1$. We denote by $P_{t_0}^t	$ the flow of the non-autonomous vector field $V_\tau = f_{u(\tau)}$ as in \eqref{FLUX}. 
	Then we have $\gamma(t) = P_0^t(q_0)$ for $t\in[0,1]$. 
	 By our discussion in 
	Section~\ref{sec:chron_calc},  we see that the differential $(P_t^{1})_*:
	T_{\gamma(t)}M \to  T_{q_1}M$ is given by 
	\[
		(P_t^{1 })_*= \mathrm{Ad} ( (P_t^1)^{-1}) =\mathrm{Ad}(P_1^t)= 
		\mathrm{Ad}\left(\eexp\int_1^t f_{u(\tau)}d\tau\right).
	\]
	The adjoint map $(P_t^{1})^*$ sends $T^*_{q_1}M$ to $T_{\gamma (t)}^*M$. 
	For every  $\lambda\in \IM(d_0G)^\perp$, the curve of covectors defined by 
	\[
	\lambda(t):=(P_t^{1})^*\lambda\in T_{\gamma(t)}^*M,
	\quad  t\in [0,1],
	\]
	is called the \emph{adjoint curve}  to $\gamma$ relative to $\lambda$. In the corank 1 case, this curve is unique  up to normalization of $\lambda \neq 0$.

	\begin{proof}[{\bf Proof of Theorem~\ref{thm:pointwisecondBIS}}] Proving  \eqref{result} 
	is equivalent to show that for any $v_1,v_2,v_3\in\R^k$ and for  $\lambda\in \IM(d_0G)^\perp$ we have 
	\[
		\left\langle \lambda, [ g^{t}_{v_1},[g^{t}_{v_3},g^{t}_{v_2} ]](q_1)\right\rangle+\left\langle \lambda, [ g^{t}_{v_2},[ g^{t}_{v_3},g^{t}_{v_1} ]](q_1)\right\rangle =0,\quad t\in[0,1];
	\]
	where, as in \eqref{eq:gut}, we set $g^{t}_{v}:=(P_t^{1})_*f_v$ for  $t\in [0,1]$ and $v\in \R^k$. Indeed, for all $i,j,\ell=1,\ldots,k$ we have
	\[
	 \langle \lambda(t), [f_i,[f_j, f_\ell]](\gamma(t)) \rangle = \langle (P_t^1)^*  \lambda,   [f_i,[f_j, f_\ell]] (\gamma(t))    \rangle = \langle \lambda, [g_i^t,[g_j^t, g_\ell^t]](q_1) \rangle, 
	\]
where we set $g_i^ t = (P_t^1)_* f_i$.

	Let us fix $\bar{t}\in [0,1)$. 
	Given $s>0$ such that $\bar{t}+s\le 1$, for every   $v\in L^1([0,1];\R^k)$ compactly supported in $(0,1)$ we define
 \be\label{eq:vs}
 	v_s(t):=v\left(\frac{t-\bar{t}}{s}\right),\quad \text{for } t \in [\bar t,\bar t+s], 
\ee
and zero elsewhere.    We consider the subspace of $  \dom(\mc{D}_0^3G )$
 \be\label{eq:subspacees}
 	E_s:=\left\{ u\in \dom(\mc{D}_0^3G )\mid u=v_s\ \ \text{for some }v\in L^1([0,1];\R^k) \text{ with }\int_0^1v(t)dt=0 \right\},
 \ee
 and we observe that while $E_s$ depends on $s$, its codimension does not.
 
Given $v\in L^1([0,1];\R^k)$, its primitive $z\in AC([0,1];\R^k)$ with $z(0)=0$ is 
 \begin{equation} \label{palla}
 	z(t)=\int_0^tv(\tau)d\tau, \quad t\in [0,1].
 \end{equation}
Similarly, for any $v_s$   as in \eqref{eq:vs} let $z_s$   be its primitive with $z_s(0)=0$. It is immediate to establish the identity: 
 \be \label{ZIZ}
 	z_s(t)=sz\left(\frac{t-\bar{t}}{s}\right).
 \ee
 Moreover, if $v_s\in E_s$  the zero-mean property of $v$ translates into:
 \be\label{eq:zerozeromean}
 	z_s(\bar{t})=z_s(\bar{t}+s)=0.
\ee
 
 In the next lines, we shall use several times the following 
  integration by parts formula. For every $0\le\alpha <\beta\le 1$ and $v\in L^1([0,1];\R^k)$, denoting by $z\in AC([0,1];\R^k)$  the primitive   of $v$, we have:
\begin{align}\label{eq:intbypar}
	\int_\alpha^\beta g^{t}_{v(t)}dt&=\int_\alpha^\beta \sum_{i=1}^k g_i^{t}(\dot z_i(t))dt\\
	&=\sum_{i=1}^k g_i^{\beta}z_i(\beta)-\sum_{i=1}^k g_i^{\alpha}z_i(\alpha)-\int_\alpha^\beta \sum_{i=1}^k(\partial_t g_i^{t})z_i(t)dt \\
	&=g^{\beta}_{z(\beta)}-g^{\alpha}_{z(\alpha)}-\int_\alpha^\beta(\partial_t g^{t})_{z(t)}dt.
\end{align}
Starting from  Proposition~\ref{prop:thirdiffendp},
applying this formula to $v_s\in E_s$ and using \eqref{eq:zerozeromean}
 we obtain:
 \begin{equation}
 \label{eq:exps}
 \begin{split} 
 	\frac{1}{2}\lambda \mc{D}_0^3G (v_s)&=\iiint_{\bar{t}\le\tau_3\le\tau_2\le \tau_1 \le \bar{t}+s}\langle\lambda,[g^{\tau_3}_{v_s(\tau_3)},[g^{\tau_2}_{v_s(\tau_2)},g^{\tau_1}_{v_s(\tau_1)}]](q_1)\rangle d\tau_3 d\tau_2 d\tau_1
 	\\
			&=-\iint_{\bar{t}\le\tau_3\le\tau_2\le \bar{t}+s}\langle\lambda,[g^{\tau_3}_{v_s(\tau_3)},[g^{\tau_2}_{v_s(\tau_2)},g^{\tau_2}_{z_s(\tau_2)}]](q_1)\rangle d\tau_3 d\tau_2
			\\
			&
			\quad
			-\iiint_{\bar{t}\le\tau_3\le\tau_2\le \tau_1 \le \bar{t}+s}\langle\lambda,[g^{\tau_3}_{v_s(\tau_3)},[g^{\tau_2}_{v_s(\tau_2)},(\partial_{\tau_1} g^{\tau_1})_{z_s(\tau_1)}]](q_1)\rangle d\tau_3 d\tau_2 d\tau_1
			\\
			&=\int_{\bar{t}\le\tau_2\le\bar{t}+s}\langle\lambda, [g^{\tau_2}_{z_s(\tau_2)},[g^{\tau_2}_{z_s(\tau_2)},g^{\tau_2}_{v_s(\tau_2)}]](q_1)\rangle d\tau_2\\
			&
			\quad
			+\iint_{\bar{t}\le\tau_3\le\tau_2\le \bar{t}+s}\langle\lambda,[(\partial_{\tau_3} g^{\tau_3})_{z_s(\tau_3)},[g^{\tau_2}_{v_s(\tau_2)},g^{\tau_2}_{z_s(\tau_2)}]](q_1)\rangle d\tau_3 d\tau_2\\
			&
			\quad 
			-\iiint_{\bar{t}\le\tau_3\le\tau_2\le \tau_1 \le \bar{t}+s}\langle\lambda,[g^{\tau_3}_{v_s(\tau_3)},[g^{\tau_2}_{v_s(\tau_2)},(\partial_{\tau_1} g^{\tau_1})_{z_s(\tau_1)}]](q_1)\rangle d\tau_3 d\tau_2 d\tau_1
			\\
			&
			= A(s)+B(s)-C(s),
 \end{split}
 \end{equation}
 where $A$, $B$, and $C$ are defined through the last identity.

 From their very definition,  we see that 
the maps 
\be
\tau\mapsto g^{\tau}_i = 
		\mathrm{Ad}\left(\eexp\int_1^t f_{u(\tau)}d\tau\right)f_i
\ee
are Lipschitz continuous for every $i=1,\dots,k$ because their derivatives depend on time through a locally bounded vector field (compare with \eqref{eq:derivativeoffield}).
Then   we have the expansion
 \be \label{LIP}
 	g_i^{\bar{t}+s\theta}=g_i^{\bar{t}}+O(s),
 \ee
 where the error $O(s)$ is uniform for $0\le\theta\le 1$.
 
 Now we estimate the terms $A(s)$, $B(s)$, and $C(s)$ appearing in \eqref{eq:exps}.
 We claim that
 \be
  A(s) = s^3\int_0^1\langle\lambda,[ g^{\bar{t}}_{z(t)},[g^{\bar{t}}_{z(t)},g^{\bar{t}}_{v(t)} ]](q_1)\rangle dt+O(s^4).
 \ee
 To prove this identity we perform in $A(s)$ the change of variable $\tau_2 = \bar t +s t $ with $t\in [0,1]$, and we use  \eqref{ZIZ} and \eqref{LIP}.
 With a similar argument, we show that
 \[
  B(s) = O(s^4)\quad\textrm{and}\quad
  C(s) = O(s^4).
 \]
We conclude that  
 \begin{align}\label{eq:expansionins}
 	\frac{1}{2}\lambda \mc{D}_0^3G(v_s)&= s^3\int_0^1\langle\lambda,[ g^{\bar{t}}_{z(t)},[g^{\bar{t}}_{z(t)},g^{\bar{t}}_{v(t)} ]](q_1)\rangle dt+O(s^4).
 \end{align}

 Let us introduce the set:
 \[
 	\mathfrak{Z}:=\left\{ z\in AC([0,1];\R^k)\mid  \dot z\in \dom(\mc{D}_0^3G^u_{q_1}),\, z(0)=z(1)=0 \right\}.
\] 
As in \eqref{palla}, in the next lines given $z\in \mathfrak{Z}$ we set $v=\dot z$.
By point (i) of Theorem~\ref{thm:open_cor_one}, the map 
$
 	\mathfrak{Z}\ni z\mapsto \lambda\mc{D}_0^3G(\dot z)
$
 is  the zero map. Otherwise the curve $\gamma$ would  not  be length-minimizing. This implies  that the principal term in \eqref{eq:expansionins}, i.e.,~the cubic map  $T:\mathfrak{Z}\to \R$, 
 \[
 	T(z)=\int_0^1\langle\lambda,[ g^{\bar{t}}_{z(t)},[ g^{\bar{t}}_{z(t)},g^{\bar{t}}_{v(t)} ]](q_1)\rangle dt,
 \]
 is identically zero. By polarization, we conclude that the trilinear map $\mathfrak{T}:\mathfrak{Z}\times \mathfrak{Z}\times \mathfrak{Z}\to \R$ associated with $T$, 
\be\label{eq:mcT}
 	\mathfrak{T}(z_1,z_2,z_3)=\frac{1}{6}\sum_{\stackrel{1\le i,j,k\le 3}{i\ne j,j\ne k, i\ne k}}\int_0^1\langle\lambda,[ g^{\bar{t}}_{z_i(t)},[ g^{\bar{t}}_{z_j(t)},g^{\bar{t}}_{v_k(t)}  ]](q_1)\rangle dt,
 \ee
is zero as well. Integrating by parts and using the Jacobi identity, we obtain
 \begin{align}
 	\int_0^1\langle\lambda&,[ g^{\bar{t}}_{z_1(t)},[ g^{\bar{t}}_{z_2(t)},g^{\bar{t}}_{v_3(t)}  ]](q_1)\rangle dt=\\&=-\int_0^1\langle\lambda,[ g^{\bar{t}}_{v_1(t)},[ g^{\bar{t}}_{z_2(t)},g^{\bar{t}}_{z_3(t)}  ]](q_1)\rangle dt-\int_0^1\langle\lambda,[ g^{\bar{t}}_{z_1(t)},[ g^{\bar{t}}_{v_2(t)},g^{\bar{t}}_{z_3(t)}  ]](q_1)\rangle dt\\&=\int_0^1\langle\lambda,[ g^{\bar{t}}_{z_2(t)},[ g^{\bar{t}}_{z_3(t)},g^{\bar{t}}_{v_1(t)}  ]](q_1)\rangle dt- \int_0^1\langle\lambda,[ g^{\bar{t}}_{z_3(t)},[ g^{\bar{t}}_{z_2(t)},g^{\bar{t}}_{v_1(t)}  ]](q_1)\rangle dt
 	\\
 	&
 	\quad
 	+\int_0^1\langle\lambda,[ g^{\bar{t}}_{z_1(t)},[ g^{\bar{t}}_{z_3(t)},g^{\bar{t}}_{v_2(t)}  ]](q_1)\rangle dt,
\end{align}
and a similar expansion holds for $\int_0^1\langle\lambda,[ g^{\bar{t}}_{z_2(t)},[ g^{\bar{t}}_{z_1(t)},g^{\bar{t}}_{v_3(t)} ] ](q_1)\rangle dt$, switching the role of $z_1$ and $z_2$. Plugging these expressions in \eqref{eq:mcT}, we find:
		\be\label{eq:mcT2}
			2\mathfrak{T}(z_1,z_2,z_3)=\int_0^1\langle\lambda,[ g^{\bar{t}}_{z_1(t)},[ g^{\bar{t}}_{z_3(t)},g^{\bar{t}}_{v_2(t)}  ]](q_1)\rangle+ \langle\lambda,[ g^{\bar{t}}_{z_2(t)},[ g^{\bar{t}}_{z_3(t)},g^{\bar{t}}_{v_1(t)}  ]](q_1)\rangle  dt.
		\ee
		To conclude the proof it suffices to show that $\mathfrak{T}=0$  implies that  the trilinear map
		$\mathfrak{A}:\R^k\times \R^k\times \R^k \to \R$
		\begin{align}
			\mathfrak{A}(x,y,z) =\langle\lambda,[ g^{\bar{t}}_{x},[ g^{\bar{t}}_{z},g^{\bar{t}}_{y} ]](q_1)\rangle+ \langle\lambda,[ g^{\bar{t}}_{y},[ g^{\bar{t}}_{z},g^{\bar{t}}_{x}  ]](q_1)\rangle
		\end{align}
		is zero. We now prove by contradiction that if $\mathfrak{T}=0 $  then $\mathfrak{A}=0$, thus completing our argument.
		
		Assume that there exist vectors $x,y,z\in\R^k$ such that $\mathfrak{A}(x,y,z)\ne 0$. We claim that there exists  $\alpha\in C^\infty([0,1];\R)$ 
		such that: 
		\begin{itemize}
			\item[(i)]$\alpha x, \alpha y, \alpha z, \dot\alpha x, \dot\alpha   y$ and $\dot\alpha  z$ belong to $\mathfrak{Z}$, and
			\item[(ii)] we have for some $j\neq 0$
			\begin{equation} \label{alf}
			\int_0^1 \alpha(t)\dot\alpha(t) ^2 dt
			=-4\sqrt{2} j^2\pi^2 .
		\end{equation}
			\end{itemize}
			 To see this, let us consider the standard trigonometric basis of $L^2([0,1];\R)$,
		\[
			\{1\}\cup \left\{ \sqrt{2}\sin(2\pi j t),\sqrt{2}\cos(2\pi j t),\ \ j=1,2,\dots\right\}.
		\]
		Since $\dom(\mc{D}_0^3G)$ is, by assumption, of finite codimension in $\ker(d_0G)$, it is also of finite codimension in $L^2([0,1];\R^k)$, implying that for any given vector $v\in \R^k$ the set 
		\[
			J_v:=\left\{j\in\N \mid \text{either } 
			\sin(2\pi j t) v\not\in\mathfrak{Z}\;\text{or } 
			\left(\cos(2\pi j t)-1\right) v\not\in\mathfrak{Z}\right\}\subset\N
		\]
		is finite.  
		Our assertion follows picking $j\in (J_x\cup J_y\cup J_z)^c$ and defining $\alpha(t):=\sqrt{2}\left(\cos(2\pi j t)-1\right)$. Finally, with $\alpha$ and $j$ chosen as above and using \eqref{alf}, we deduce from equation \eqref{eq:mcT2} that
		\be
			\mathfrak{T}(\alpha x,\alpha y,\dot\alpha  z)=-2\sqrt{2}j^2\pi^2\left(\langle\lambda,[ g^{\bar{t}}_{x},[ g^{\bar{t}}_{z},g^{\bar{t}}_{y} ]](q_1)\rangle+ \langle\lambda,[ g^{\bar{t}}_{y},[ g^{\bar{t}}_{z},g^{\bar{t}}_{x}  ]](q_1)\rangle\right) \neq 0, 
		\ee
		whence the absurd.
	\end{proof}
		
		\begin{remark}\label{rem:compatible} In accordance with the two possible expressions of $\lambda \mc{D}^3_0G$ given in \eqref{eq:tworep}, we observe that \eqref{result} is symmetric with respect to $v_1$ and $v_2$, being therefore independent of the choice of the representation.
		\end{remark}

\section{Third order analysis of a singular extremal}\label{sec:example}

We prove in this section Theorem~\ref{thm:example}. The sub-Riemannian structure $(\R^3,\Delta)$ in its statement has step  $p+1$. 
If $p=1$  then there is no singular curve because if a covector $\lambda$ is orthogonal to $f_1,f_2$, and $[f_1,f_2]$ then it is zero, contradicting the Pontryagin Maximum Principle. 
If $p\ge 2$ then $\Delta$ has constant step equal to $2$ away from the   plane $x_1=0$.   Then any singular extremal  passes through $x_1=0$.

\begin{proof}[{\bf Proof of Theorem~\ref{thm:example} - (i)}]
A horizontal curve 
$\gamma \in AC([0,1];\R^3)$
satisfies, for some control $u\in L^1([0,1];\R^2)$,
\[
\dot{\gamma}_1 =u_1 ,\ \ \dot{\gamma}_2 =u_2 (1-\gamma_1 ),\ \ \dot{\gamma}_3 =u_2 \gamma_1 ^p \quad \textrm{a.e.~on $[0,1]$.}
\]
We assume that $\gamma(0)=0$ and that $|u(t)|=1$ for a.e.~$t\in[0,1]$. If $\gamma$ is a 
strictly singular  
extremal, 
the Prontryagin Maximum Principle yields a non-zero dual curve $\lambda$ satisfying along $\gamma$ the additional equations
\[
0=\langle \lambda , f_1  \rangle = \lambda_1\quad \textrm{and}\quad   0=\langle \lambda ,f_2 \rangle  = \lambda_2(1-\gamma_1)+\lambda_3 \gamma_1 ^p\quad \textrm{on     $[0,1]$.}
\]
The Goh condition $\langle \lambda,[f_1,f_2]\rangle=0$ along $\gamma$
implies the further relation
$\lambda_2 =p\lambda _3 \gamma_1 ^{p-1}$.  
Now, using $|\gamma_1|\leq 1$, it is not difficult to  see that 
$\lambda_2=0$ and thus $\gamma_1=0$. Then we have $u_1=0$ and thus $|u_2|=1$.  
In particular $\gamma(t)=(0,t,0)$ is a singular extremal, whose dual curve is 
constant  $\lambda(t)=(0,0,1)$, and whose control, $u(t)=(0,1)$, is constant  as well.

We claim that the curve  $\gamma (t)=(0,t,0)$ is strictly singular. Indeed,  any regular extremal $\gamma$ together with its dual curve  $\lambda$ is a characteristic
curve of the following Hamiltonian system. Let $H:T^*\R^3=\R^3\times\R^3 \to \R$ be  the Hamiltonian
\[
H(x,\lambda) = \frac12 \big( \langle\lambda  , f_1 (x)\rangle ^2+ \langle \lambda  , f_2 (x)\rangle ^2\big).
\]
If $\gamma$ is regular, then the pair $(\gamma,\lambda)$
solves the system of ordinary differential equations 
$\dot \gamma  = H_\lambda(\gamma ,\lambda)$ and $\dot\lambda = - H_x(\gamma,\lambda)$. In particular, $\gamma$ is smooth.

Now assume by contradiction that for 
the curve  $\gamma (t)=(0,t,0)$  there exists an absolutely continuous curve of covectors $\lambda$ such that $\dot \gamma = H_\lambda(\gamma,\lambda)$ and $\dot\lambda = - H_x(\gamma,\lambda)$.
If $\gamma$ satisfies the first equation, it follows that $\lambda_1 =0$ and $\lambda_2=1$.
From the second equation it then follows that $\dot\lambda_1 = \lambda_2^2=1$, that is a contradiction.
\end{proof}

When $p>2$, we have $[f_1,[f_1,f_2]](\gamma(t))=[f_2,[f_1,f_2]](\gamma(t))=0$, and thus $\gamma$ is not even ``regular abnormal'' in the sense of   \cite[Section 6]{LS}. Now we show that when $p$ is an even integer the singular curve $\gamma$ is locally length-minimizing.

\begin{proof}[{\bf Proof of Theorem~\ref{thm:example} - (ii)}] The precise claim we prove is the following: let $p$ be an even integer and $t_0:=\frac{2}{p+1}$.
For any compact interval $[a,b]\subset \R$ such that $b-a<t_0$,
the segment $\gamma(t)=(0,t,0)$, $t\in[a,b]$, is the unique length-minimizing curve in $(\R^3,\Delta,g)$ joining the point  $(0,a,0)$ to $(0,b,0)$. The proof follows an idea of \cite[Section 7.1]{LS}.

Let $\tau:=b-a<t_0$ and let $\eta:[0,\tau]\to \R^3$ be any horizontal curve parameterized by arc-length such that $\eta(0)=(0,a,0)$ and $\eta(\tau)=(0,b,0)$. Then there exist measurable functions $u_1,u_2$ (unique up to sets of measure zero) such that  
\[
\dot{\eta}_1 =u_1 ,\ \ \dot{\eta}_2 =u_2 (1-\eta_1 ),\ \ \dot{\eta}_3 =u_2 \eta_1 ^p\ \ \text{a.e.~on }[0,\tau].
\]

We claim that, under the hypothesis that $\int_0^\tau u_1 dt= \int_0^\tau u_2 \eta_1 ^p dt=0$, one has 
\begin{equation}   
\label{condiz}
\int_0^\tau u_2 (1-\eta_1 )dt\le \tau,
\end{equation}
with equality holding only for $u_1=0$ and $u_2=1$ a.e.~on $[0,\tau]$. 

Using this claim, the proof can be concluded in the following way. If $\gamma'$ is another admissible curve joining $(0,a,0)$ and $(0,b,0)$ in time $\tau-\epsilon<\tau$, we construct another curve $\gamma''$ by attaching to $\gamma'$ a straight segment back and forth from $(0,b,0)$, for a total time $\epsilon$. But then, the inequality in \eqref{condiz} would be strict, and we have an absurd. Moreover, we observe that  $\mathrm{length}(\gamma)=\tau=\eta_2(\tau)-\eta_2(0)=\int_0^\tau u_2 (1-\eta_1 )dt$. 
So we have equality in \eqref{condiz}, and from the claim it follows that $\eta=\gamma$.
This proves that $\gamma$ is the unique length-minimizing curve between its end-points.

 We prove  
 \eqref{condiz}.	
Let us define the function $V(s)=\int_0^s u_2 dt$ for $s\in [0,\tau]$ and $\beta:=\|\eta_1\|_{L^\infty([0,\tau];\R)}$. Since
\[
\int_0^\tau u_2 (1-\eta_1 )dt\le V(\tau)+\tau \beta=\tau-(\tau-V(\tau))+\beta\tau,
\]
it suffices to show that $\beta\le\frac{\tau-V(\tau)}{\tau}$, that is  a consequence of the following inequalities:
\be\label{eq:ineqprop}
t_0\beta^{p+1} \le \int_0^\tau \eta_1 ^p dt\le \beta^p(\tau-V(\tau)).
\ee
The inequality in the right-hand side follows easily, since 
\[
\int_0^\tau \eta_1 ^p dt=\int_0^\tau \eta_1 ^p(1-u_2 )dt.
\]
To prove the inequality in the left-hand side of \eqref{eq:ineqprop}, we fix $t_{\max}\in [0,\tau]$ such that $|\eta_1(t_{\max})|=\beta$. Since $\eta_1(0)=\eta_1(\tau)=0$ and $|\dot{\eta}_1|\le 1$ a.e.~on $[0,\tau]$, we have  $\min\{t_{\max},\tau-t_{\max}\}\ge \beta$, meaning that the intervals  $I_1=[t_{\max}-\beta,t_{\max}]$ and $I_2=[t_{\max},t_{\max}+\beta]$ are contained in $[0,\tau]$. These arguments also imply that on $I_1$ and $I_2$, $|\eta_1|$ is bounded from below by linear functions $\ell_1$ and $\ell_2$, respectively, such that $\ell_1(t_{\max}-\beta)= \ell_2(t_{\max}+\beta)=0$ and $\ell_1(t_{\max})=\ell_2(t_{\max})=\beta$. Since the exponent $\m$ is even, this implies that $\int_0^\tau \eta_1^p  dt\ge \frac{2}{p+1}\beta^{p+1}$, as desired.

Clearly, if $(u_1  ,u_2 )=(0,1)$ a.e.~on $[0,\tau]$ we have   equality in \eqref{condiz}. Conversely, assuming that equality holds we deduce that $\beta=(\tau-V(\tau))\frac{p+1}{2}$, whence from \eqref{eq:ineqprop} we obtain 
\[
\int_0^\tau \eta_1 ^p(1-u_2 )dt= \beta^p(\tau-V(\tau)),
\]
which holds if and only if $\eta_1  =\beta$ on $[0,\tau]$. Since $\eta_1(0)=0$, then $\eta_1=0$, and this concludes the proof.
\end{proof}

The first case where the previous argument fails is when $p=3$. In this case, to the best of our knowledge  it is   an open question to decide whether the curve  $\gamma(t)=(0,t,0)$ is minimizing or not.
Using Theorem~\ref{thm:open_cor_one},
we will see that the answer is in the  negative.

\begin{proof}[{\bf Proof of Theorem~\ref{thm:example} - (iii)}] Let $F= F_{q_0}:L^1([0,1];\R^2)\to\R^3$ be the end-point map with initial point $q_0=0$ introduced in \eqref{eq:endp}. We claim that $F$ is open at the point  $u=(0,1)\in L^1([0,1];\R^2)$,   the  control of the singular trajectory $\gamma$.
As in \eqref{ABC} 
 and \eqref{lio}, we let $G(v) =G_{q_1}(v) = F_{q_0}(u+v)$, where $q_1=(0,1,0)$. 
 The infinitesimal analysis of $F$ at $u$ is reduced to the infinitesimal analysis of $G$ at $0$. By Lemma \ref{prop:expbrack} the differential of $G$ at $0$ is  given by $d _0 G (v)  =\int_0^1 g^{t}_{v(t)} dt$, where
\[
g^{t}_{v(t)}  =\sum_{i=1}^2 v_i(t)\mathrm{Ad}\left(\eexp\int_1^t f_{u(\tau)}d\tau \right)f_i,
\]
and $\mathrm{Ad}\big(\eexp\int_1^t f_{u(\tau)}d\tau \big)$ is the differential of the inverse of the flow  $(x,t)\mapsto P_1^{t}(x)$, where $x=(x_1,x_2,x_3)\in \R^3$, $t\in [0,1]$, and $P_1^{t}(x)=\gamma(t)$, where $t\mapsto \gamma(t)$ is the horizontal trajectory with control $u$ such that $\gamma(1) = x\in\R^3$. 
Using the formulas for $f_1$ and $f_2$ in \eqref{eq:vfields}, we find
\[
\gamma_1(t)=x_1,\ \ \gamma_2(t)=(t-1)(1-x_1)+x_2,\ \ \gamma_3(t)=(t-1)x_1^3+x_3.
\]
The inverse of the differential   $(P_1^{t})_*^{-1}=(P_t^{1})_*:T_{(0,t,0)}\R^3\to T_{(0,1,0)}\R^3$ is given by
\[
(P_t^{1})_*=\left( \begin{array}{ccc} 1 & 0 & 0 \\ t-1 & 1 & 0\\ -3(t-1)x_1^2 & 0 & 1 \end{array} \right).
\]
Accordingly, the vector fields  $g_
1^{t}$ and $g_
2^{t}$ are
\be
\label{eq:vfpushed}
\begin{split}	
& g_
1^{t}:=(P_t^{1})_*f_1=\frac{\partial}{\partial x_1}+(t-1)\frac{\partial}{\partial x_2}-3(t-1)x_1^2\frac{\partial}{\partial x_3},
\\
& g_2^{t}:=(P_t^{1})_*f_2=f_2,	 
\end{split}
\ee
and we  obtain the following formula for the differential of $G$: 
\[
d_{0}G (v)=
\left(\begin{array}{c} 
\displaystyle \int_0^1v_1(t)dt 
\\ 
\displaystyle 
\int_0^1 \big\{ (t-1)v_1(t)+v_2(t) \big\}
dt 
\\ 0 \end{array}\right).
\]
We then see that a generator of $\operatorname{Im}(d_0 G)^\perp$ is the covector
$\lambda =(0,0,1)$.

We compute the   intrinsic Hessian $\mathcal{D}^2_0 G$, again using  Lemma \ref{prop:expbrack}.
By 	 \eqref{eq:vfpushed},
for every $0\le t_1,t_2 \le 1$ and every $v,w\in \R^2$ at the point   $q_1=(0,1,0)$
we have $ \langle \lambda, [g_{v}^{t_1},g_{w}^{t_2}](q_1)\rangle =0$. Then $\lambda d^2_0G(v)=0$ for every $v\in \ker(d_{0}G)$,  hence $\mathcal{D}^2_0 G=0$.

Finally, we compute the   intrinsic third differential  $\mathcal{D}^3_0G$.
Note first that since the intrinsic Hessian vanishes, by our definition in  \eqref{eq:setsthird} we also have  $\dom(\mc{D}_0^3G)=\ker(d_{0}G)$. 
The only commutator of length three which has non-zero third component is $[g_{1}^{t_1},[g_{1}^{t_2},g_{1}^{t_3}]](q_1)$, and namely we have 
\[
\langle \lambda,[g_{1}^{t_1},[g_{1}^{t_2},g_{1}^{t_3}]](q_1)\rangle=6(t_2-t_3).
\]
Then, again by Lemma \ref{prop:expbrack},   for  $v=(2\pi \sin(2\pi t), 1)\in \ker(d_0G)=\dom(\mc{D}_0^3G)$, the third differential is
\[
\lambda\mc{D}_0^3G(v)=2\int_0^1\int_0^{t_1}\int _0^{t_2} 6v_1(t_1)v_1(t_2)v_1(t_3)(t_2-t_3)dt_3dt_2dt_1=15\neq 0.
\]
Hence, by Theorem \ref{thm:open_cor_one} the mapping $G$ is open at $0$ and thus the singular trajectory $\gamma$ is not optimal (i.e., of minimal length). Alternatively, we could have used Theorem~\ref{thm:pointwisecondBIS} to deduce that $\lambda= 0$, contradicting Pontryagin Maximum Principle.
 
\end{proof}

	\bibliographystyle{abbrv}
	\bibliography{Biblio}

\end{document}